\newtheorem{Theorem}{Theorem}[section]
\newtheorem{Conjecture}[Theorem]{Conjecture}
\newtheorem{Lemma}[Theorem]{Lemma}
\newtheorem{Property}[Theorem]{Property}
\newtheorem{Proposition}[Theorem]{Proposition}
\theoremstyle{definition}
\theoremstyle{remark}
\newtheorem{Remark}[Theorem]{Remark}
\numberwithin{equation}{section}
\numberwithin{equation}{subsection}
\newcommand{\ceps}{\reflectbox{$3$}}
\begin{document}

	\title{Elementary slopes of plane-wide breakout}
	\author{Yann Jullian}
	\address[Yann Jullian]{Ingénieur CaSciModOT, Institut Denis Poisson, CNRS UMR 7013, Université de Tours}
	\email{yann.jullian@univ-tours.fr}

	\begin{abstract}
		We consider a mathematical model of the \textit{breakout} game where $\mathds{R}^2$
		is covered by unit square \textit{bricks} everywhere except in one place.
		We introduce \textit{elementary blocks} (particular sets of bricks) in order to
		describe a family of \textit{periodic} (in a \textit{breakout}-appropriate sense)
		orbits. Necessary and sufficient conditions for slopes (the initial direction of the ball)
		to produce these orbits are given.
	\end{abstract}

	\maketitle

	\setcounter{tocdepth}{3}
	\tableofcontents
	\newpage

	\section{Introduction}
		This article deals with a mathematical model of the popular game \textit{breakout}. In the game,
		a set of bricks is placed in a space delimited by walls and a ball is launched. The ball bounces
		similarly off the walls and the bricks, but when a brick is hit, it is immediately destroyed,
		opening up the space in which the ball can travel. The player controls a pallet at the bottom
		of the screen and must prevent the ball from escaping by having it bounce, as it would off a wall, off the pallet.
		The aim is to destroy all the bricks. The mathematical model forgoes the player entirely, but keeps
		the main principle of bricks being destroyed when hit.\\

		The article \cite{BF} should be considered the reference when dealing with the mathematics of breakout. As such, it sets
		definitions and notations in the most general of cases. Following from \cite{BF}, a general setup
		specifies a (polygonal, not necessarily bounded) subset of $\mathds{R}^2$ as a domain
		where obstacles (compacts polygons) are placed. A ball is launched from an unobstructed
		(not occupied by an obstacle) point of the domain and travels at constant speed. Upon hitting
		the frontier of the domain, the ball bounces as it would in a billiard (with the incidence angle being
		equal to the reflection angle). When the ball hits an obstacle, the ball bounces similarly,
		but the obstacle is destroyed in the process.\\

		We consider only a single case in the present article.
		The domain is $\mathds{R}^2$ (there are no frontiers) and the obstacles are the unit squares
		$\mathcal{O} = \{[x, x+1]\times [y, y+1], (x, y)\in \mathds{Z}^2\setminus \{(-1, 0)\}\}$.
		Obstacles will simply be called \textbf{bricks}, and all are present initially.

		The general question that this article attempts to partially answer is whether the behavior observed
		in square billiards has an equivalent in such a system. Namely, a rational slope (the initial direction of the ball)
		in square billiards will always result in a periodic orbit. There can be no periodic orbits in our setup
		since the number of bricks keeps decreasing, but we can look for periodicity in the sequence of bricks being destroyed.
		In \cite{BF}, \textit{relative periodicity} is introduced as a canonical way to express periodicity in a breakout setup.
		Loosely, an orbit is said to be relatively periodic if the ball repeatedly breaks the same, up to translations, blocks of bricks.
		An illustration of a relatively periodic orbit is given on figure \ref{fig:01_02} while the proper definition is recalled in section \ref{sec:definitions}.
		\begin{Remark}
			All of our figures follow the same convention. Each repeating block is given its own color and for each
			block, the bricks are numbered, starting from $0$, in the order they would be destroyed by the ball.
			The black square represents brick $0$ of the next block.
		\end{Remark}
		\begin{figure}[h!]
			\begin{center}
				\scalebox{0.5}{\includegraphics{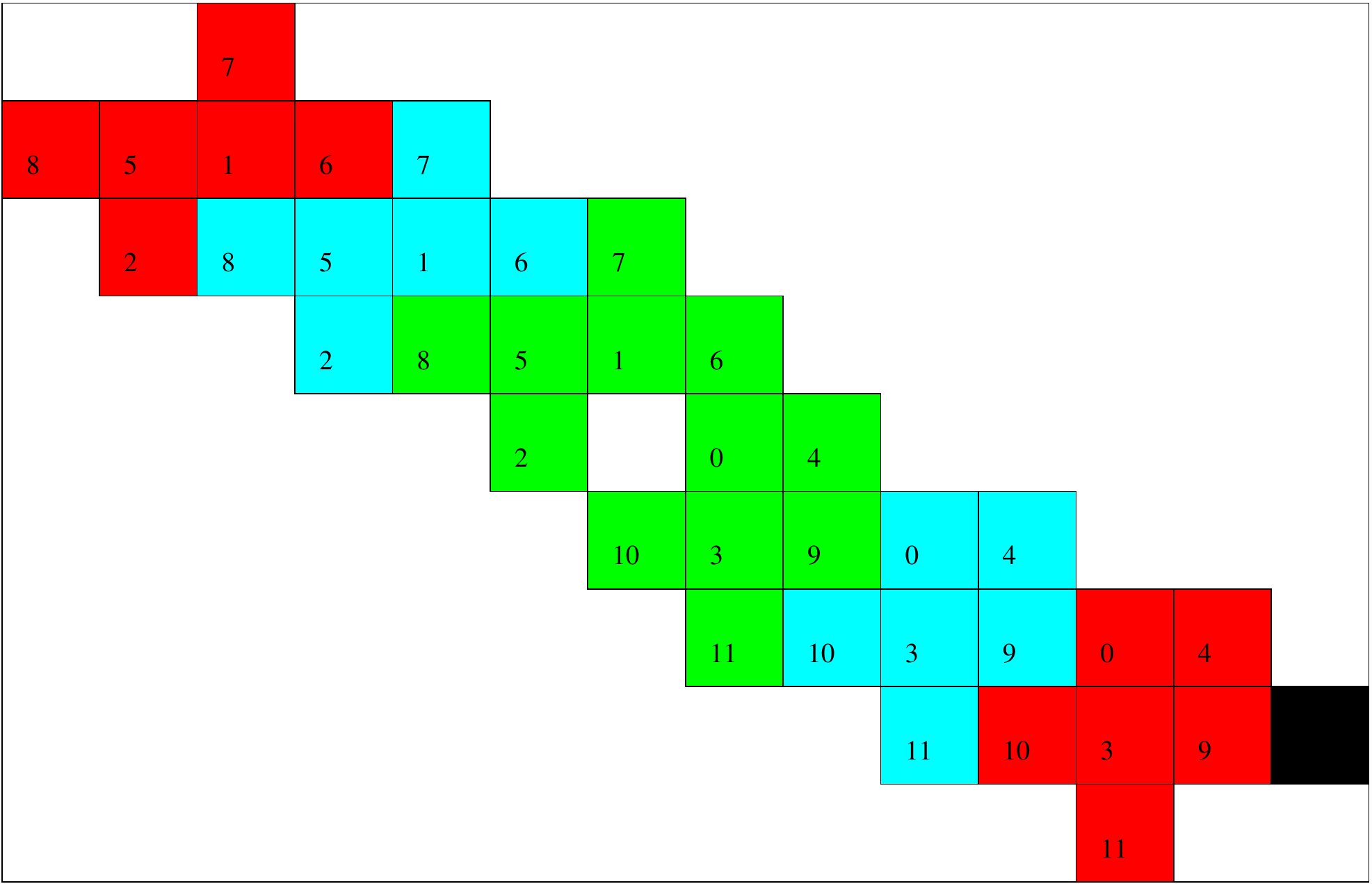}}
			\end{center}
			\vspace{-4mm}
			\caption{Relative periodicity of slope $\frac{1}{2}$.}
			\label{fig:01_02}
		\end{figure}

		The general question then becomes:
		with $\mathds{R}^2$ as the domain and $\mathcal{O} = \{[x, x+1]\times [y, y+1], (x, y)\in \mathds{Z}^2\setminus \{(-1, 0)\}\}$
		as the set of obstacles, do rational slopes produce relatively periodic orbits?
		Only a few of such slopes are given in \cite{BF}.\\

		The present article exposes a family of slopes with relatively periodic orbits.
		We introduce \textit{elementary blocks} as finite sets of bricks composed of
		\begin{itemize}
			\item a single \textit{top} brick with (lower left corner's) ordinate $Y+1$ for some $Y\in \mathds{Z}$,
			\item a single \textit{bottom} brick with (lower left corner's) ordinate $Y-1$,
			\item a one-brick-missing contiguous set of bricks with (lower left corner's) ordinate $Y$.
		\end{itemize}
		Two such elementary blocks are depicted in figure \ref{fig:03_34}. One of the specificity of these blocks is that they
		may (under sufficient conditions) be \textit{stacked} on top of each other to produce full orbits.
		\begin{figure}[h!]
			\begin{center}
				\scalebox{0.5}{\includegraphics{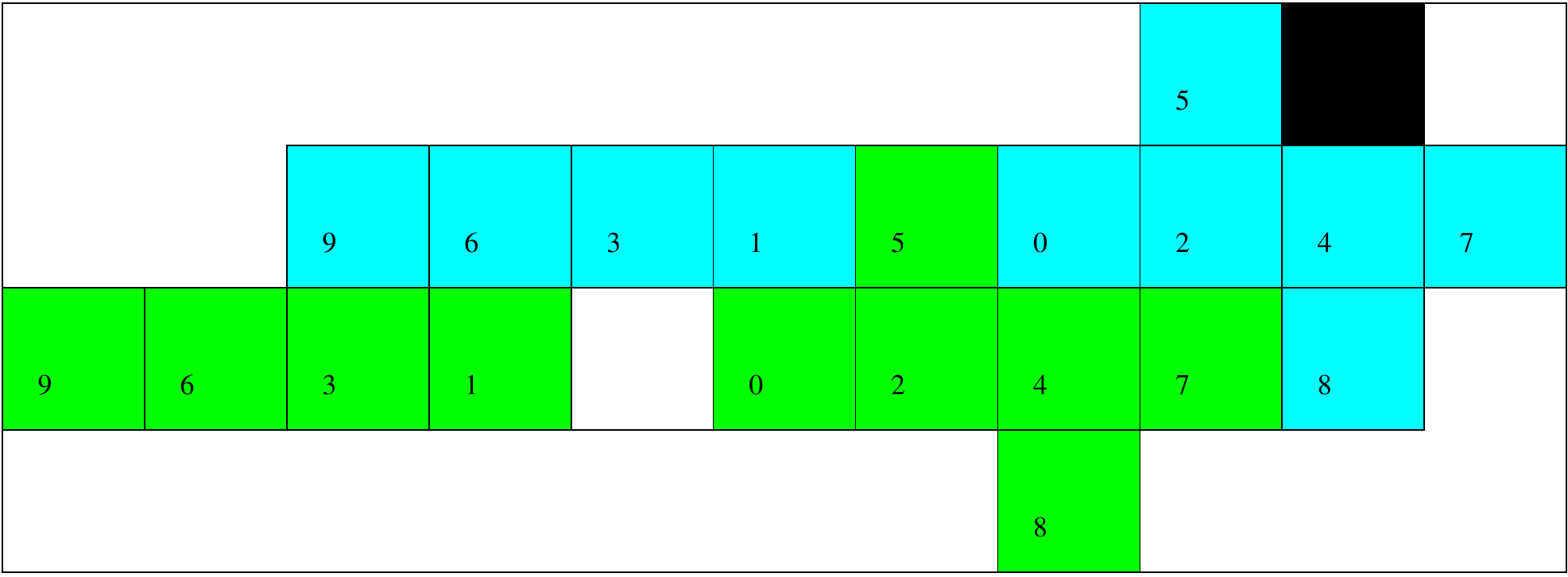}}
			\end{center}
			\vspace{-4mm}
			\caption{Two relative periods for slope $\frac{3}{34}$.}
			\label{fig:03_34}
		\end{figure}
		Our main result is to give necessary (theorem \ref{theorem:necessary_conditions}) and sufficient (theorem \ref{theorem:sufficient_conditions})
		conditions for a slope to have its orbit composed exclusively of elementary blocks, resulting in a relatively periodic orbit.
		We will say the slope is \textit{elementary} in that case.\\

		The article is divided into three parts. The general setup of the system is described in section \ref{sec:definitions}.
		Using unit squares for bricks allows our orbits to benefit from the properties of cutting sequences (see \cite{MH}).
		The inherent properties of these sequences are recounted in section \ref{subsec:slicing_sequence} and
		we introduce \textit{slicing sequences} as modified cutting sequences. We define elementary blocks
		in section \ref{subsec:elementary_blocks} and explain how they can be stacked on top of each other in section \ref{subsec:stacking}.

		Section \ref{sec:all_elementary_slopes} is the main part of the article. In section \ref{subsec:necessary_conditions},
		we give necessary conditions for a slope to be elementary (theorem \ref{theorem:necessary_conditions}). These conditions
		are mainly obtained by applying the properties of slicing sequences to elementary blocks.
		The approach (the use of slicing sequences) has two consequences.
		The first one is that it essentially reduces the search for elementary slopes to the study of slopes of the form $\frac{1}{\chi}$ for some positive integer $\chi$.
		This is developed in section \ref{subsec:finding_elementary_blocks}. The second consequence is the restriction of the number of the pairs of blocks
		whose stackings we need to examine. This is done in section \ref{subsec:stackability} and allows us to express sufficient conditions for elementary slopes
		in section \ref{subsec:sufficient_conditions}. These sufficient conditions are linear and quadratic diophantine equations.
		The results of this section can be summed up as follows.
		\begin{Theorem}
			All elementary slopes are rational. If $S=\frac{p}{q}$ (irreducible) is elementary, then $p$ is either
			$1$ or a multiple of $3$, and if $p\ne 1$, then $q$ is congruent to either $1$ or $p-1$ modulo $p$.

			Deciding if a slope is elementary comes down to checking if at most $2$ blocks are elementary and stack on top
			of each other.
		\end{Theorem}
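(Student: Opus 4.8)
The plan is to read each assertion off the slicing-sequence description of a trajectory and to organise the argument in the three stages corresponding to the structure of Section \ref{sec:all_elementary_slopes}.

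First I would establish rationality. Since the bricks are unit squares, the orbit between consecutive collisions is a straight segment whose crossings of the integer grid are recorded by a cutting sequence, for which the slicing sequences of Section \ref{subsec:slicing_sequence} are the natural book-keeping device. A relatively periodic orbit must have an eventually periodic slicing sequence; but the cutting sequence of an irrational slope is Sturmian, hence aperiodic. This rules out irrational slopes, so any elementary slope is rational, say $S=\frac{p}{q}$ in lowest terms with $p,q>0$.

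Next I would derive the arithmetic constraints, which is the content of theorem \ref{theorem:necessary_conditions}. The idea is to track, through its slicing sequence, how the ball crosses a single elementary block. Such a block occupies the three rows of ordinates $Y-1,Y,Y+1$, so the vertical excursion of the trajectory is confined to a band of height three, with the unique top brick and unique bottom brick serving as the turning points of that excursion and the contiguous middle row (minus one brick) fixing the horizontal span. Unfolding the reflections off the top and bottom bricks, the requirement that exactly one top and one bottom brick be hit forces the unfolded vertical period to be commensurate with the three-row height; this is the source of the dichotomy $p=1$ (the shallow case) versus $3\mid p$. Imposing in addition that the ball enter and leave the block in translation-matching configurations fixes the horizontal offset over one block to be a single brick width, which translates, via the period-$p$ structure of the slicing sequence, into $q\equiv 1$ or $q\equiv p-1$ modulo $p$.

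Finally I would establish decidability. Writing the admissible slopes as $\frac{p}{kp\pm1}$ exhibits them as perturbations of the unit fractions $\frac{1}{k}$, and the slicing-sequence viewpoint reduces the classification of candidate blocks to the reduced slopes $\frac{1}{\chi}$ of Section \ref{subsec:finding_elementary_blocks}, leaving only finitely many block shapes to test. The stacking construction of Section \ref{subsec:stacking} then realises a relatively periodic orbit as an infinite vertical stack of elementary blocks; the stackability analysis of Section \ref{subsec:stackability} cuts the relevant pairs of blocks down to a bounded list, and theorem \ref{theorem:sufficient_conditions} shows that as soon as two elementary blocks stack on top of one another the whole stack closes up periodically. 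Hence deciding elementarity reduces to checking at most two blocks, as claimed.

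The step I expect to be the main obstacle is the second one: converting the geometric data of a single block — one top brick, one bottom brick, one missing middle brick, and a matched entry and exit — into the exact congruences. Obtaining the precise residues $1$ and $p-1$, and the divisor three rather than some other integer, calls for a careful count of the grid crossings in the slicing sequence together with an accurate account of the reflections off the extremal bricks, and it is here that the unit-square geometry must be used to its fullest.
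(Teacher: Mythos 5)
Your proposal has genuine gaps at the two places where the paper's actual work lies. First, the rationality step: you assume that an elementary orbit is relatively periodic and hence has an eventually periodic slicing sequence, but neither implication is available at that point. In the paper, relative periodicity of elementary orbits is a \emph{consequence} of the classification in lemma \ref{lemma:necessary_conditions}, not an input; and a priori an aperiodic (irrational, Sturmian) slicing sequence could still be cut into consecutive triples each of which is an elementary block, with the five admissible types $\Lambda, \Lambda_0, \Lambda_1, \Lambda_2, \Lambda_{02}$ occurring in aperiodic order — so Sturmian aperiodicity by itself does not exclude irrational elementary slopes. The paper's actual mechanism is different: it first proves that certain pairs of blocks cannot be elementary together (proposition \ref{prop:not_both_elementary}, resting on the parity constraints \ref{eq:top_same_parity}--\ref{eq:top_different_parities} and proposition \ref{prop:elementary_rho2}), and then, in proposition \ref{prop:periodic_elementary_sequence}, uses irrationality of $S$ to get density of $T_y(3k)$ modulo $1$ in $[0, S]$, producing two initial ordinates whose blocks share the index-$0$ coordinate but differ in the index-$2$ coordinate; by the exclusion result these cannot be elementary together, a contradiction. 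Your outline contains no substitute for this incompatibility machinery, and without it the first step fails.

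Second, the congruences. You locate the difficulty correctly, but the mechanism you suggest (a ``horizontal offset over one block of a single brick width'' forced by matched entry and exit) is not the source of $q \equiv \pm 1 \pmod{p}$. The congruence comes from the shape of the minimal period: lemma \ref{lemma:necessary_conditions} shows any minimal period of length $>1$ is, up to circular permutation, $\Lambda^k \Lambda_0$, so the period of the slicing sequence contains exactly one correcting slice $\chi' = \chi \pm 1$ among its $p$ entries, whence $q = p\chi \pm 1$. Establishing that classification requires excluding $\Lambda_1$, $\Lambda_2$ and $\Lambda_{02}$ from longer periods, which the paper does by combining property \ref{property:slicing_gaps} (runs of leading slices have length $\lambda_1$ or $\lambda_1 + 1$) with pairwise exclusions proven through parity arguments on identities such as $\frac{(\alpha_2+1)^2 - (\alpha_0+1)^2}{2} = 2\chi \pm 1$; none of this appears in your plan. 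Likewise, the dichotomy $p = 1$ versus $3 \mid p$ follows from the periodicity of the elementary sequence (three slices per block), which again depends on the missing exclusion lemma rather than on a commensurability count alone. Your third stage — reduction to slopes $\frac{1}{\chi}$, the stackability analysis, and invoking theorem \ref{theorem:sufficient_conditions} to close up the stack — does match the paper's sections \ref{subsec:finding_elementary_blocks} through \ref{subsec:sufficient_conditions} at the level of a plan, but it inherits the earlier gaps since it presupposes the period classification.
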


		The last section touches on the subject of symbolic orbits, and is another attempt to draw parallels with billiards.
		To any orbit in a billiard, we can associate a (right-)infinite word (the symbolic orbit)
		$\omega = (\omega_n)_{n\in \mathds{N}}$ of $\{a, b\}^\mathds{N}$ such that $\omega_n = a$
		if the $n$th bounce happens on a horizontal edge and $\omega_n = b$ otherwise. These words
		are actually cutting sequences of lines (this is seen by \textit{unfolding} billiards, see \cite{Tab})
		and we refer again to \cite{MH} for their properties. The main point is that the words themselves tell us
		everything we need to know about the geometrical orbit. In particular the slope can simply be deduced by looking at
		the frequencies of $a$ and $b$ and a periodic word is equivalent to a rational slope.

		A similar word can be associated to any breakout orbit, where every letter of the word corresponds to a
		bounce and a brick being destroyed. However, what information this word gives us on the orbit is unclear.
		For example, relative peridiodicity implies that the symbolic orbit is periodic,
		but we are unable to say whether the converse is true. We argue in section \ref{sec:symbolic_orbits}
		that they are not good representations of the geometrical orbits by giving an infinity of elementary slopes all sharing
		the same symbolic orbit.

		One may also question the properties of the words obtained this way. We use elementary slopes to
		build periodic symbolic orbits with arbitrarily long periods.

	\section{Definitions}\label{sec:definitions}
		Throughout the article, we consider $\mathds{R}^2$ to be the domain and
		$\mathcal{O} = \{[x, x+1]\times [y, y+1], (x, y)\in \mathds{Z}^2\setminus \{(-1, 0)\}\}$
		to be the initial set of bricks.
		We say a ball travels \textbf{northeast} if its direction vector has two positive coordinates and \textbf{northwest}
		if the first coordinate of its direction vector is negative while the second one is positive.
		We consider all balls to be launched
		\begin{itemize}
			\item from initial position $(0, y)$ for some $0 < y < 1$,
			\item with initial direction northeast.
		\end{itemize}
		In effect, this means the first broken brick $[0, 1]\times [0, 1]$ is hit on a vertical edge after the ball has traveled a distance of $0$.
		We call \textbf{slope} the slope of the line carrying the ball's initial direction (for a distance of $0$) and
		we assume all slopes to be in the interval $(0, 1)$. Defining a slope as $\frac{p}{q}$ will always
		imply that $p$ and $q$ are both positive integers and that the fraction is irreducible.\\

		We recall here the definition of relative periodicity given in \cite{BF}. A \textit{half-strip} of width $w$ is defined
		as a part of $\mathds{R}^2$ whose points are at (Euclidian) distance less than $w$ from a half-line. For any given orbit,
		there exists a minimal $K\in \mathds{N}\cup \{+\infty\}$ and a set $(H_n)_{n\in [0, K)}$ of half-strips that contains all the bricks
		destroyed by the ball. It is proven in \cite{BF} that $K$ is either $1, 2$ or $+\infty$.
		For any $k\in [0, K)$, we define $(z^k_n)_{n\in \mathds{N}}$ as the sequence of $\mathds{Z}^2$ coordinates of (lower left corners of) bricks
		of $H_k$ (a brick may belong to multiple half-strips) destroyed by the ball. We then say that the orbit is \textbf{relatively periodic}
		if for any $k\in [0, K)$, the sequence $(z^k_n - z^k_{n-1})_{n\in \mathds{N}^*}$ is periodic or preperiodic (periodic after a certain index).

		Figure \ref{fig:01_02} is an example of a relatively periodic orbit where two half-strips are needed to cover all destroyed bricks,
		while figure \ref{fig:03_34} is an example requiring a single half-strip.\\

		We will consider that hitting a brick on a horizontal edge applies a symmetry, along the axis the edge lives on,
		to both the remaining obstacles and the ball's trajectory. This is similar to billiard's \textit{unfolding} (see \cite{Tab}),
		but we only apply it to horizontal edges because we want to retain the orbits' abscissas. This makes the ball's ordinate strictly increasing,
		softening calculations and formulas while having no effect on relative periodicity. Keep in mind that this setting cannot be represented on pictures.

		In that setting, we associate, to any orbit that starts at the edge of the first broken brick, the \textbf{traveling function}
		\begin{center}
			$\begin{array}{cccc}
				T: & \mathds{N} & \to & \mathds{Z}\times \mathds{R}^+\\
				& n & \mapsto & (T_x(n), T_y(n))
			\end{array}$
		\end{center}
		where $n$ represents the \textit{horizontal} distance traveled by the ball.
		We will call $T_x(n)$ and $T_y(n)$ the \textbf{traveling abscissa} and \textbf{traveling ordinate} respectively.

		\begin{Property}
			As a consequence of the setting described above, we have, for any slope $S$ and any $n\in \mathds{N}$,
			\[T_y(n) = T_y(0) + Sn\]
		\end{Property}

		We will always assume that the orbits we choose do not intersect any point of $\mathds{Z}^2$.

		\subsection{The slicing sequence}\label{subsec:slicing_sequence}
			The \textit{cutting sequence} of an orbit is defined as the infinite word of $\{a, b\}^\infty$ obtained by moving
			along the orbit and writing $b$ when the orbit reaches a point with integral abscissa  and $a$ when it reaches
			a point with integral ordinate. We emphasize that it is different from the symbolic orbit (see section \ref{sec:symbolic_orbits})
			because the letters of the symbolic orbit correspond only to bricks being hit, while the cutting sequence also considers vertical
			and horizontal lines being crossed.

			Such a sequence is known to be \textit{balanced}, that is, if two finite words of the cutting sequence have the same length,
			then their numbers of $b$ (or $a$) are at most $1$ apart. We refer to \cite{MH} for an in-depth analysis
			of cutting and balanced sequences, with a few notes. The vocabulary has changed since \cite{MH}:
			\textit{Sturmian} is used today to refer to \cite{MH}'s \textit{irrational Sturmian}
			and \textit{balanced} is used today to refer to \cite{MH}'s \textit{rational and irrational Sturmian}.
			The \textit{skew} issue is avoided here by considering that our trajectories do not intersect $\mathds{Z}^2$.

			Given a slope $S$ and an initial ordinate $T_y(0)\in (0, S)$, we define the \textbf{slicing sequence} $(\chi_n)_{n\in \mathds{N}}$
			as the sequence of numbers of consecutive $b$ in the cutting sequence. Alternatively,
			\begin{align*}
				\forall N&\in \mathds{N}, & T_y(0) + (\sum\limits_{0\le n\le N} \chi_n) S&\in (N+1, N+1+S)
			\end{align*}
			Note that different initial ordinates may produce different slicing sequences for the slope $S$. All of these slicing sequences
			share common properties. The following property is given in \cite{MH} (section $4$).
			\begin{Property}\label{property:slicing_elements}
				All slicing sequences of $S$ contain at most two elements $\lambda_0$ and $\lambda_0+1$ where
				\begin{align*}
					1 &= \lambda_0 S + \mu_0 & \lambda_0\in \mathds{N}, \mu_0&\in (0, S)
				\end{align*}
			\end{Property}

			Additionally, from \cite{MH} (theorem $8.1$), the slicing sequences are also balanced and the relative frequencies of
			$\lambda_0$ and $\lambda_0+1$ is given by
			\begin{align*}
				\lim\limits_{n\to +\infty} \frac{\#\{\chi_j = \lambda_0, 0\le j < n\}}{\#\{\chi_j = \lambda_0+1, 0\le j < n\}} &= \frac{\mu_0}{S - \mu_0}
			\end{align*}
			We will refer to the most frequent and less frequent terms of the slicing sequences as the \textbf{leading slice} and \textbf{correcting slice}
			of $S$ respectively.
			\begin{Property}\label{property:slicing_gaps}
				(Again from \cite{MH}.) The number of consecutive leading slices in any slicing sequence is either $\lambda_1$ or $\lambda_1+1$ with
				\begin{align*}
					1 &= \lambda_1 S' + \mu_1 & S' &= min(\frac{S - \mu_0}{\mu_0}, \frac{\mu_0}{S - \mu_0})
				\end{align*}
			\end{Property}

		\subsection{Elementary blocks}\label{subsec:elementary_blocks}
			\subsubsection{Ternary blocks}\label{subsubsec:ternary_blocks}
				We will use figure \ref{fig:01_41} as a visual aid to explain notations. The figure is obtained by launching
				a ball with slope $\frac{1}{41}$ from any position $(0, T_y(0))$ with $T_y(0)\in (0, \frac{1}{41})$.

				\begin{figure}[h!]
					\begin{center}
						\scalebox{0.6}{\includegraphics{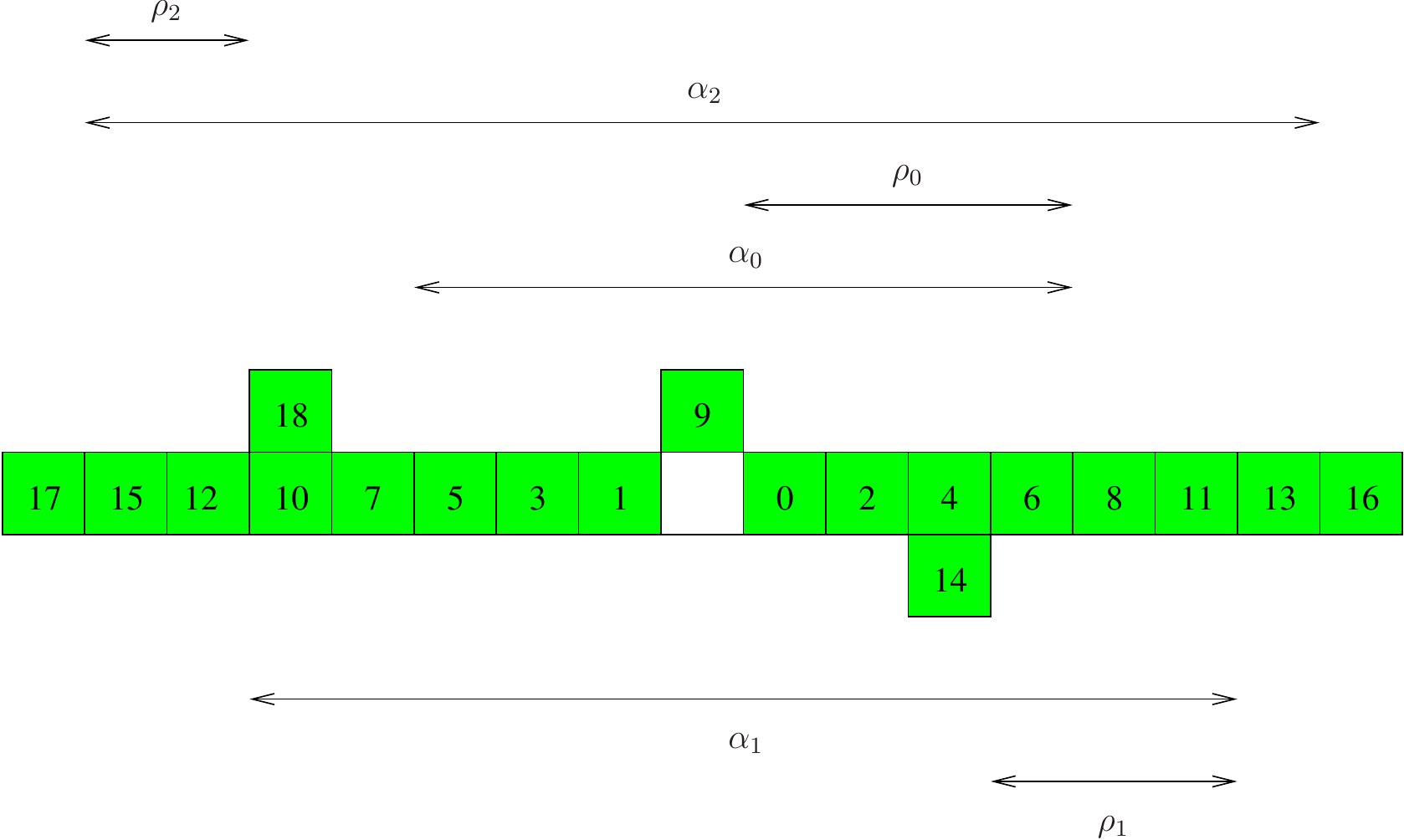}}
					\end{center}
					\vspace{-4mm}
					\caption{Ternary block associated with $\frac{1}{41}$.}
					\label{fig:01_41}
				\end{figure}

				For any pair $(\alpha, \rho)\in \mathds{N}\times \mathds{N}$ with $0\le \rho < \alpha+1$, we define
				\begin{align*}
					|\alpha, \rho| &= \frac{\alpha(\alpha+1)}{2} + (\rho + 1)
				\end{align*}
				and
				\begin{align*}
					X(|\alpha, \rho|) &= \left( \sum\limits_{0 < n \le \alpha} (-1)^n n \right) + (-1)^{\alpha+1} (\rho + 1)
				\end{align*}

				\noindent
				Interpreting these definitions on figure \ref{fig:01_41}, $|\alpha_i, \rho_i|$ ($0\le i\le 2$) corresponds to the ceiled (rounded up to an integer)
				horizontal distance required to reach the $i$th (starting at $0$) horizontal edge and $X(|\alpha_i, \rho_i|)$ ($0\le i\le 2$)
				is the abscissa of the ball after traveling a horizontal distance of $|\alpha_i, \rho_i|$.\\

				We define a \textbf{ternary block} as any tuple $\Delta = ((\alpha_i, \rho_i))_{0\le i\le 2}$
				such that
				\begin{itemize}
					\item $(\alpha_i, \rho_i)\in \mathds{N}\times \mathds{N}$ and $0\le \rho_i < \alpha_i+1$ for any $0\le i\le 2$,
					\item there exists a slope $S$ such that there exists an ordinate $0 < T_y(0) < S$ for which
						\begin{equation}\label{eq:slope_existence}
							(i+1) < T_y(|\alpha_i, \rho_i|) < (i+1) + S
						\end{equation}
						holds for any $0\le i\le 2$. Such a slope is said to be $\Delta$-\textbf{primary} or \textbf{primary for} $\Delta$.
				\end{itemize}

				\noindent
				In effect, this means that there exists an orbit with initial ordinate in $(0, S)$ such that the $i$th, for any $0\le i\le 2$, horizontal edge is reached after traveling
				a horizontal distance $x_i$ with $|\alpha_i, \rho_i| - 1 < x_i < |\alpha_i, \rho_i|$. Any such orbit is called a \textbf{$\Delta$-orbit}.

				The $0$th brick hit on a horizontal edge is called the \textbf{top brick} and the $1$th is called the \textbf{bottom brick}.
				The last encountered horizontal edge may belong to the (now broken) top brick (see figure \ref{fig:03_34})
				or to another (not yet broken) brick (as is the case on figure \ref{fig:01_41}).\\

				For any ternary block $\Delta = ((\alpha_i, \rho_i))_{0\le i\le 2}$, we define
				\begin{align*}
					|\Delta| &= |\alpha_2, \rho_2|
				\end{align*}

				\begin{Property}\label{property:coincidence}
					If $\Delta$ is a ternary block, then $X$ and $T_x$ coincide on $|\alpha_i, \rho_i|$ for any $0\le i\le 2$ (and therefore on $|\Delta|$)
					for any $\Delta$-orbit.
				\end{Property}

			\subsubsection{Ternary blocks and slicing sequence}
				We also define ternary blocks as triplets of elements of the slicing sequences. We will write
				$\Delta = (\chi_0, \chi_1, \chi_2) = ((\alpha_i, \rho_i))_{0\le i\le 2}$ with
				\begin{align*}
					\chi_0 &= |\alpha_0, \rho_0|\\
					\chi_1 &= |\alpha_1, \rho_1| - |\alpha_0, \rho_0|\\
					\chi_2 &= |\alpha_2, \rho_2| - |\alpha_1, \rho_1|
				\end{align*}
				Equations \ref{eq:slope_existence} can then be rewritten, for any $0\le i\le 2$, as
				\begin{equation}\label{eq:slope_existence_chi}
					(i+1) < T_y(0) + (\sum\limits_{0\le k\le i}\chi_k) S < (i+1) + S
				\end{equation}
				Note that the expression in terms of elements of the slicing sequences gives no information regarding the abscissas of the bricks
				that constitute the ternary block, which explains the need for the two definitions.

				From property \ref{property:slicing_elements}, the ternary blocks a slope can be primary for will have all of their coordinates
				within one of each other. As such, we will usually pick a representative $\Delta = ((\alpha_i, \rho_i))_{0\le i\le 2}$
				and express the other blocks using the following notations.

				For any pair $(\alpha, \rho)\in \mathds{N}\times \mathds{N}$ with $0\le \rho < \alpha+1$, we define
				\begin{align*}
					d_-(\alpha, \rho) &= (\alpha, \rho-1) & \text{if }\rho > 0\\
					d_-(\alpha, 0) &= (\alpha-1, \alpha-1) &
				\end{align*}
				and
				\begin{align*}
					d_+(\alpha, \rho) &= (\alpha, \rho+1) & \text{if }\rho < \alpha\\
					d_+(\alpha, \alpha) &= (\alpha+1, 0) &
				\end{align*}

				\begin{Remark}
					As many arguments will be common to both $d_-$ and $d_+$,
					we will often use the symbol $\ast\in \{-, +\}$ and only revert to $-$ and $+$
					when necessary. The symbol will also be used in equations as both a unary and binary operator.
				\end{Remark}

			\subsubsection{Elementary blocks}
				Finally, we define an \textbf{elementary block} as a ternary block $\Delta$ such that
				\begin{itemize}
					\item if $\alpha_0$ and $\alpha_2$ have the same parity, then
						\begin{equation}\label{eq:top_same_parity}
							\alpha_2 - 2\rho_2 = \alpha_0 - 2\rho_0
						\end{equation}
					\item and if $\alpha_0$ and $\alpha_2$ have the different parities, then
						\begin{equation}\label{eq:top_different_parities}
							\alpha_2 - 2\rho_2 = -\alpha_0 + 2\rho_0 + 1
						\end{equation}
				\end{itemize}

				\noindent
				These conditions ensure that the third encountered horizontal edge belongs to the (now broken) top brick.
				The following proposition will be used throughout this article.
				\begin{Proposition}\label{prop:elementary_rho2}
					If $\Delta = ((\alpha_i, \rho_i))_{0\le i\le 2}$ is an elementary block, then $\rho_2\notin \{0, \alpha_2\}$.
				\end{Proposition}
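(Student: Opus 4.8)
The plan is to argue by contradiction: I assume $\rho_2\in\{0,\alpha_2\}$ and rule it out in each of the two parity cases defining an elementary block. Beyond equations \ref{eq:top_same_parity} and \ref{eq:top_different_parities}, the only structural input I need is that the three horizontal edges are met in order. Indeed, by equation \ref{eq:slope_existence} and $T_y(n)=T_y(0)+Sn$, the quantities $T_y(|\alpha_i,\rho_i|)$ lie in the disjoint intervals $(i+1,i+1+S)$, so the integers $|\alpha_0,\rho_0|<|\alpha_1,\rho_1|<|\alpha_2,\rho_2|$ strictly increase; moreover each gap $\chi_{i+1}=|\alpha_{i+1},\rho_{i+1}|-|\alpha_i,\rho_i|$ satisfies $\chi_{i+1}S>1-S$, hence $\chi_{i+1}\ge 1$ (as $S<1$). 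Two consequences will be used repeatedly: first, $|\alpha_2,\rho_2|\ge|\alpha_0,\rho_0|+2$, so there is room for the middle value $|\alpha_1,\rho_1|$; second, since for fixed $\alpha$ the values $|\alpha,\rho|$ form the consecutive block $[\frac{\alpha(\alpha+1)}{2}+1,\frac{(\alpha+1)(\alpha+2)}{2}]$ and these blocks increase with $\alpha$, the inequality $|\alpha_2,\rho_2|>|\alpha_0,\rho_0|$ forces $\alpha_2\ge\alpha_0$. I also use $0\le\rho_0\le\alpha_0$.

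I would first treat $\rho_2=\alpha_2$. When $\alpha_0,\alpha_2$ have different parities, \ref{eq:top_different_parities} reads $-\alpha_2=-\alpha_0+2\rho_0+1$, i.e. $\alpha_2=\alpha_0-2\rho_0-1<\alpha_0$, contradicting $\alpha_2\ge\alpha_0$. When they have the same parity, \ref{eq:top_same_parity} gives $\alpha_2=2\rho_0-\alpha_0\le\alpha_0$, so with $\alpha_2\ge\alpha_0$ we get $\alpha_2=\alpha_0$ and $\rho_0=\alpha_0$; then $(\alpha_2,\rho_2)=(\alpha_0,\alpha_0)=(\alpha_0,\rho_0)$, so $|\alpha_2,\rho_2|=|\alpha_0,\rho_0|$, contradicting the strict ordering. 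The subcase $\rho_2=0$ with the same parity is identical: \ref{eq:top_same_parity} becomes $\alpha_2=\alpha_0-2\rho_0\le\alpha_0$, forcing $\alpha_2=\alpha_0$ and $\rho_0=0$, hence $(\alpha_2,\rho_2)=(\alpha_0,\rho_0)$ and the same collapse.

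The remaining, and only delicate, subcase is $\rho_2=0$ with $\alpha_0,\alpha_2$ of different parities. Here \ref{eq:top_different_parities} gives $\alpha_2=-\alpha_0+2\rho_0+1$, and $\alpha_2\ge\alpha_0$ together with $\rho_0\le\alpha_0$ squeezes $\rho_0=\alpha_0$ and $\alpha_2=\alpha_0+1$. Now the pairs do not coincide, so I cannot simply contradict monotonicity; instead I compute directly that $|\alpha_0,\alpha_0|=\frac{(\alpha_0+1)(\alpha_0+2)}{2}$ and $|\alpha_0+1,0|=\frac{(\alpha_0+1)(\alpha_0+2)}{2}+1$, so $|\alpha_2,\rho_2|=|\alpha_0,\rho_0|+1$. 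This leaves no integer strictly between $|\alpha_0,\rho_0|$ and $|\alpha_2,\rho_2|$, so $|\alpha_1,\rho_1|$ cannot exist, contradicting $|\alpha_0,\rho_0|<|\alpha_1,\rho_1|<|\alpha_2,\rho_2|$. This last subcase is the one I expect to be the crux, since the obstruction is not that the first and third edges merge but that they become adjacent and squeeze out the second edge; isolating the two consequences of the edge ordering at the outset is what makes all four subcases fall quickly.
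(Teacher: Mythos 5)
Your proof is correct and follows essentially the same route as the paper's: casework on the parity of $\alpha_0,\alpha_2$ and on $\rho_2\in\{0,\alpha_2\}$ using $0\le\rho_0\le\alpha_0\le\alpha_2$, with the three easy subcases collapsing to $(\alpha_0,\rho_0)=(\alpha_2,\rho_2)$ and the delicate subcase ($\rho_2=0$, different parities) forcing $\rho_0=\alpha_0$, $\alpha_2=\alpha_0+1$ and $|\alpha_2,\rho_2|=|\alpha_0,\rho_0|+1$, which squeezes out $|\alpha_1,\rho_1|$. The only difference is that you spell out explicitly (via the gaps $\chi_{i+1}\ge 1$ and the monotonicity of the blocks $|\alpha,\cdot|$) what the paper compresses into the phrase ``impossible for slopes in $(0,1)$'', which is a welcome clarification but not a different argument.
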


				\begin{proof}
					If $\alpha_0$ and $\alpha_2$ have the same parity, then equation \ref{eq:top_same_parity} applies and we have
					\begin{align*}
						\alpha_2 &= \alpha_0 - 2\rho_0 & \text{if}~ \rho_2 &= 0\\
						-\alpha_2 &= \alpha_0 - 2\rho_0 & \text{if}~ \rho_2 &= \alpha_2
					\end{align*}
					With $0\le \rho_0\le \alpha_0\le \alpha_2$, we deduce, in both cases, $\rho_0 = \rho_2$ and $\alpha_0 = \alpha_2$, which is impossible for slopes in $(0, 1)$.

					If $\alpha_0$ and $\alpha_2$ have different parities, then equation \ref{eq:top_different_parities} applies and we have
					\begin{align*}
						\alpha_2 &= -\alpha_0 + 2\rho_0 + 1 & \text{if}~ \rho_2 &= 0\\
						-\alpha_2 &= -\alpha_0 + 2\rho_0 + 1 & \text{if}~ \rho_2 &= \alpha_2
					\end{align*}
					Again with $0\le \rho_0\le \alpha_0\le \alpha_2$, we deduce that the case $\rho_2=\alpha_2$ is impossible
					and that if $\rho_2=0$, we must have $\rho_0 = \alpha_0$ and $\alpha_2 = \alpha_0+1$. In that case,
					$|\alpha_2, \rho_2| = |\alpha_0, \rho_0|+1$ and the only possible values for $|\alpha_1, \rho_1|$ are either
					$|\alpha_0, \rho_0|$ or $|\alpha_2, \rho_2|$, neither of which is possible for slopes in $(0, 1)$.
				\end{proof}

		\subsection{Stacking elementary blocks}\label{subsec:stacking}
			In order to get full orbits, we want to be able to stack elementary blocks on top of each other,
			with the top brick of a given elementary block acting as the origin square of the following elementary block.

			Given two elementary blocks $\Delta = ((\alpha_i, \rho_i))_{0\le i\le 2}$ and $\Delta' = ((\alpha_i', \rho_i'))_{0\le i\le 2}$,
			we say that $\Delta'$ \textbf{stacks on top of} $\Delta$ if there exists an orbit such that for any $0\le i\le 2$, we have
			\begin{align*}
				T_x(|\Delta| + |\alpha_i', \rho_i'|) &= T_x(|\Delta|) + (-1)^{\alpha_2+1} X(|\alpha_i', \rho_i'|)
			\end{align*}

			\noindent
			This relation is neither commutative nor reflexive. An elementary block $\Delta$ that stacks on top of itself is said to be \textbf{self-stackable}.
			Figure \ref{fig:03_34} shows an example of a self-stackable elementary block depicted twice: in green then in blue.

			It should be noted that the geometry of a ternary block is not uniquely defined and depends on the initial direction of the ball.
			We still assume that the ball initially launches northeast, which takes care of the first elementary block, but we can make no
			such assumption for the stacked blocks. This fact is the origin of the $(-1)^{\alpha_2+1}$ term in the equation above.

			Figure \ref{fig:01_138} is another example of a self-stackable elementary block. This time however, the blue depiction is a
			symmetrical image of the green depiction. This is caused by the ball traveling northwest while starting the second block.
			\begin{figure}[h!]
				\begin{center}
					\scalebox{0.6}{\includegraphics{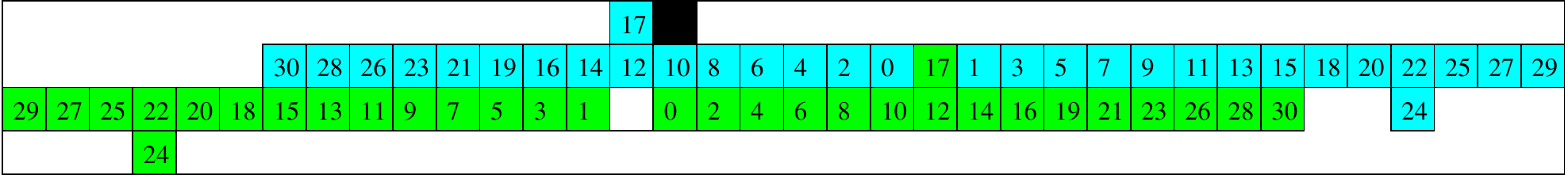}}
				\end{center}
				\vspace{-4mm}
				\caption{One relative period for slope $\frac{1}{138}$ ($\alpha_2$ even).}
				\label{fig:01_138}
			\end{figure}

			A necessary condition for the existence of an orbit going through $\Delta$ and $\Delta'$ successively
			is that the bricks broken during the $\Delta$ phase (horizontal distance traveled is $< |\Delta|$)
			do not overlap with the bricks broken during the $\Delta'$ phase. The condition is not sufficient,
			as $\Delta$ and $\Delta'$ could have a disjoint set of primary slopes. Even with common primary
			slopes, the initial ordinate of the ball when the first brick of $\Delta'$ is reached could be incompatible with a $\Delta'$-orbit.
			These sufficient conditions will be ignored with no repercussions.

			The only brick that requires attention is the bottom brick of $\Delta'$.
			We focus on the left edge of the bottom brick of $\Delta'$ and the left edges of the leftmost and rightmost bricks of $\Delta$.
			If a ball is traveling east (increasing abscissa) when hitting the bottom brick of $\Delta'$, then the traveling abscissa at $|\Delta| + |\alpha_1', \rho_1'|$
			points to the right edge of the bottom brick, and we need to subtract $1$. Hence, we define $\zeta(\Delta, \Delta') = 1$ if $\alpha_2$ and $\alpha_1'$
			have the same parity and $0$ otherwise.

			The necessary condition for $\Delta'$ to be stackable on top of $\Delta$ is then:
			\begin{itemize}
				\item if $\alpha_2$ is even, then
					\begin{align*}
						T_x(|\Delta| + |\alpha_1', \rho_1'|) - \zeta(\Delta, \Delta') &\notin [X(|\alpha_2-1, 0|) - 2, X(|\alpha_2, 0|) + 1]\\
						&\notin \left[-\frac{\alpha_2}{2} - 1, \frac{\alpha_2}{2}\right]
					\end{align*}

				\item and if $\alpha_2$ is odd, then
					\begin{align*}
						T_x(|\Delta| + |\alpha_1', \rho_1'|) - \zeta(\Delta, \Delta') &\notin [X(|\alpha_2, 0|) - 2, X(|\alpha_2-1, 0|) + 1]\\
						&\notin \left[-\frac{\alpha_2+1}{2}-1, \frac{\alpha_2-1}{2}\right]
					\end{align*}
			\end{itemize}

			\noindent
			One may look at figures \ref{fig:03_34}, \ref{fig:01_138} and \ref{fig:01_297} for visual support.
			\begin{figure}[h!]
				\begin{center}
					\scalebox{0.6}{\includegraphics{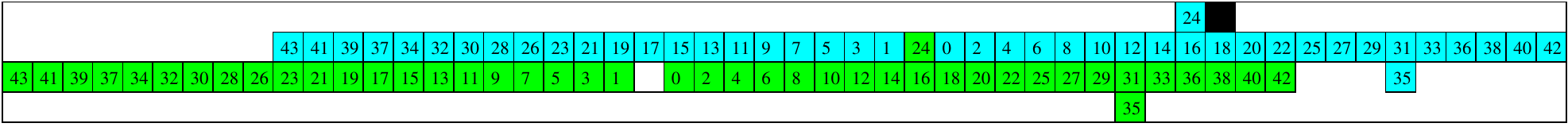}}
				\end{center}
				\vspace{-4mm}
				\caption{Two relative periods for slope $\frac{1}{297}$ ($\alpha_2$ odd, $\alpha_1'$ odd).}
				\label{fig:01_297}
			\end{figure}

		\subsection{Elementary slopes}
			Let $(\Delta_n = ((\alpha_{i, n}, \rho_{i, n}))_{0\le i\le 2})_{n\in \mathds{N}}$ be a sequence of elementary blocks.
			For any $N\in \mathds{N}$, define
			\begin{align*}
				\alpha_N &= \sum\limits_{0\le j < N} (\alpha_{2, j}+1) & \Sigma_N &= \sum\limits_{0\le j < N} |\Delta_j|
			\end{align*}

			A given slope $S$ will be said to be \textbf{elementary} with \textbf{elementary sequence} $(\Delta_n)_{n\in \mathds{N}}$
			if there exists an origin point with ordinate in $(0, S)$ and a sequence of elementary blocks such that
			\begin{align*}
				T_x(\Sigma_N + |\alpha_{i, N}, \rho_{i, N}|) &= T_x(\Sigma_N) + (-1)^{\alpha_N} X(|\alpha_{i, N}, \rho_{i, N}|)
			\end{align*}

			\noindent
			for any $0\le i\le 2$ and any $N\in \mathds{N}$. This is essentially saying that there exists an orbit associated with the slope
			that successively goes through each block of its elementary sequence.

			We note that a slicing sequence is simply a less detailed expression of an elementary sequence.

	\section{Characterising all elementary slopes}\label{sec:all_elementary_slopes}
		Picking a slope $S$ and an origin ordinate $T_y(0)\in (0, S)$ (carefully chosen so the orbit does not intersect $\mathds{Z}^2$)
		is enough to fully determine a slicing sequence, which in turn is enough to determine a sequence of ternary blocks $(\Delta_n)_{n\in \mathds{N}}$.
		Whether this sequence is elementary is the result of two factors:
		\begin{itemize}
			\item for any $n\in \mathds{N}$, $\Delta_n$ is an elementary block,
			\item for any $n\in \mathds{N}^*$, $\Delta_n$ stacks on top of $\Delta_{n-1}$.
		\end{itemize}

		We deal with these issues separately. In section \ref{subsec:necessary_conditions}, we study elementary sequences and obtain
		necessary conditions for their existence. This, in turn imposes necessary conditions on slopes. The issue of the origin ordinate
		is mostly ignored there.

		Sections \ref{subsec:finding_elementary_blocks} and \ref{subsec:stackability} act together to provide sufficient conditions
		for slopes to be elementary. We explain how studying slopes of the form $\frac{1}{\chi}$ is enough
		to not only determine all other elementary slopes, but also give the form of their elementary sequence.
		The various forms of elementary sequences are given in lemma \ref{lemma:necessary_conditions}.
		In section \ref{subsec:finding_elementary_blocks}, we explain how these elementary sequences
		are obtained, and we study the stackability of elementary blocks in section \ref{subsec:stackability}.

		\subsection{Necessary conditions for elementary slopes}\label{subsec:necessary_conditions}
			The aim of this section is to prove the following theorem.
			\begin{Theorem}\label{theorem:necessary_conditions}
				All elementary slopes are rational. If $S=\frac{p}{q}$ (irreducible) is elementary, then $p$ is either
				$1$ or a multiple of $3$, and if $p\ne 1$, then $q$ is congruent to either $1$ or $p-1$ modulo $p$.
			\end{Theorem}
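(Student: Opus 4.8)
The plan is to work entirely on the slicing sequence. By the definition of an elementary sequence, an elementary slope $S$ decomposes its slicing sequence into consecutive triples $\Delta_n = (\chi_{0,n}, \chi_{1,n}, \chi_{2,n})$, each of which must be an elementary block, and by Property \ref{property:slicing_elements} every $\chi_{i,n}$ is either $\lambda_0$ or $\lambda_0+1$. The first task is to translate the elementary conditions \ref{eq:top_same_parity}--\ref{eq:top_different_parities} into statements about these slice values. Using $|\alpha_0,\rho_0| = \chi_0$ and $|\alpha_2,\rho_2| = \chi_0+\chi_1+\chi_2 = 3\lambda_0 + k$, where $k\in\{0,1,2,3\}$ counts the correcting slices in the block, I recover $(\alpha_0,\rho_0)$ and $(\alpha_2,\rho_2)$ from the triangular-number decomposition of $|\alpha,\rho| = \tfrac{\alpha(\alpha+1)}{2}+\rho+1$. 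Writing $\delta_i = \alpha_i - 2\rho_i$, a short computation gives $X(|\alpha,\rho|) = \tfrac{\delta-2}{2}$ when $\alpha$ is even and $\tfrac{1-\delta}{2}$ when $\alpha$ is odd, so the elementary conditions become the clean integer relations $\delta_2 = \delta_0$ (equal parities) and $\delta_2 = 1-\delta_0$ (opposite parities). Since $\delta_0$ depends only on whether $\chi_0$ is leading or correcting, while $\delta_2$ is a strictly decreasing (step $-2$) function of $k$ inside a fixed triangular row, this isolates, for each admissible $\chi_0$, the very few values of $k$ that make a block elementary.

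With this dictionary I would next establish rationality. The constraints just derived restrict the number of correcting slices contributed by each block to a fixed finite set; combined with the balancedness of the slicing sequence, I would argue this forces the correcting slices to occur with a fixed rational frequency. Since, by the frequency formula, that frequency is an explicit affine function of $1/S$ with rational coefficients (through $\mu_0 = 1-\lambda_0 S$), its being rational forces $1/S$, hence $S$, to be rational. This is the cleanest part of the argument.

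Writing $S = p/q$ with $q = \lambda_0 p + r$ and $0\le r < p$, each slicing period then consists of $p$ slices summing to $q$, exactly $r$ of which equal $\lambda_0+1$, while the ball rises $p$ levels per period against $3$ levels per block. For the divisibility $3\mid p$ (or $p=1$) I would match the triple-grouping of blocks to the period: summing the per-block relations $\delta_2 = \pm\delta_0 + c$ and the abscissa recursion $T_x(\Sigma_{N+1}) - T_x(\Sigma_N) = (-1)^{\alpha_N} X(|\Delta_N|)$ over one relative period, with the parity bookkeeping $\alpha_N = \sum_{j<N}(\alpha_{2,j}+1)$, and showing the period cannot close up unless the number of crossed levels $p$ is a multiple of $3$.

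Finally, for the congruence I would combine the $k$-dictionary with Property \ref{property:slicing_gaps} on the runs of leading slices: requiring that every triple — including those straddling the highly regular run structure of a balanced sequence — be elementary leaves room for only a single correcting slice per period, or a single leading slice per period, i.e. $r\in\{1,p-1\}$. The main obstacle is precisely this last step together with the parity analysis behind $3\mid p$: the elementary conditions bifurcate into the equal- and opposite-parity cases for $(\alpha_0,\alpha_2)$, and since $\alpha_2\approx\sqrt{3}\,\alpha_0$ the parity of $\alpha_2$ is not freely controlled, so the case analysis — tracking the position of $\lambda_0$ within its triangular row, the parities of $\alpha_0$ and $\alpha_2$, and the admissible values of $k$ — is where the real work lies and must be organised carefully so as to cover every configuration.
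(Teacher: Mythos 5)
Your $\delta$-dictionary is sound: writing $\delta_i=\alpha_i-2\rho_i$, your relations $\delta_2=\delta_0$ and $\delta_2=1-\delta_0$ are literally equations \ref{eq:top_same_parity} and \ref{eq:top_different_parities}, and the ``step $-2$'' behaviour of $\delta_2$ within a triangular row is legitimate once one knows $\rho_2\notin\{0,\alpha_2\}$ (proposition \ref{prop:elementary_rho2}), which you are using implicitly and would need to prove. Your endgame for $3\mid p$ and $r\in\{1,p-1\}$ also follows essentially the paper's road: pairwise exclusions between block types combined with property \ref{property:slicing_gaps} on runs of leading slices force the period of the elementary sequence to be one of $\Lambda,\Lambda_0,\Lambda_1,\Lambda_2$ or $\Lambda^k\Lambda_0$ (lemma \ref{lemma:necessary_conditions}), from which both arithmetic conclusions fall out by counting slices per period; your abscissa-recursion closing-up argument for $3\mid p$ is a plausible variant but, as you concede, the case analysis is deferred rather than done. (Minor slip: $k=3$ is impossible, since $(\chi',\chi',\chi')$ would require two adjacent correcting slices.)

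The genuine gap is in the rationality step, precisely the part you call the cleanest. You argue: the per-block number of correcting slices lies in a fixed finite set, and this ``combined with the balancedness of the slicing sequence'' forces the correcting slices to have rational frequency. But both of those hypotheses hold for \emph{every} slope, elementary or not, rational or not: for any $S$, each triple of slices contains $0$, $1$ or $2$ correcting slices, and every slicing sequence is balanced --- yet for irrational $S$ the frequency $\frac{\mu_0}{S-\mu_0}$ is irrational. So finiteness plus balancedness cannot by themselves yield rational frequency; the elementary conditions must be made to bite at the aperiodic level, and your sketch never says how. The missing mechanism, which is the paper's actual proof of proposition \ref{prop:periodic_elementary_sequence}, is a \emph{co-occurrence} argument: for irrational $S$ the block-starting ordinates $T_y(3k)\bmod 1$ are dense in $[0,S]$ (minimality of the irrational rotation), so one can find two blocks occurring in the elementary sequence whose starting ordinates straddle the critical ordinate $y_2$; these share their index-$0$ slice but differ in their index-$2$ slice, and exactly such pairs cannot both be elementary (proposition \ref{prop:not_both_elementary} --- your dictionary proves the same incompatibility, since changing $\chi_2$ by one shifts $\delta_2$ by $2$). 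With that, rationality follows by contradiction and your frequency computation becomes unnecessary. Note also that once $S=\frac{p}{q}$ is rational, your ``$p$ slices per period against $3$ levels per block'' bookkeeping silently assumes the block grouping realigns with the slicing period; the paper secures this by choosing $k=3q$ so that $T_y(k)-T_y(0)$ is an integer divisible by $3$.
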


			The proof involves studying the blocks that constitute elementary sequences. Throughout this section,
			we consider a slope $S$ with slicing sequence $(\chi_n)_{n\in \mathds{N}}$ (for some initial ordinate in $(0, S)$),
			define $\chi$ and $\chi'$, respectively, as the leading and correcting slices of $S$ and define
			\begin{align*}
				\Lambda &= (\chi, \chi, \chi) & \Lambda_{02} &= (\chi', \chi, \chi')\\
				\Lambda_0 &= (\chi', \chi, \chi) & \Lambda_1 &= (\chi, \chi', \chi) & \Lambda_2 &= (\chi, \chi, \chi')
			\end{align*}
			A correcting slice cannot occur twice consecutively by definition. Together with property \ref{property:slicing_gaps},
			we conclude that these five elementary blocks are the only blocks that could be part of an elementary sequence.
			We prove the following lemma, which implies theorem \ref{theorem:necessary_conditions}.
			\begin{Lemma}\label{lemma:necessary_conditions}
				All elementary sequences are periodic. If an elementary sequence has minimal period's length $1$,
				then its period is one of $\{\Lambda, \Lambda_0, \Lambda_1, \Lambda_2\}$.
				If an elementary sequence has minimal period's length $>1$, then its period is, up to circular permutation,
				equal to $\Lambda^k\Lambda_0$ for some integer $k > 0$.
			\end{Lemma}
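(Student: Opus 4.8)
\section*{Proof proposal for Lemma \ref{lemma:necessary_conditions}}

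The plan is to use the fact, established just above the statement, that only the five blocks $\Lambda,\Lambda_0,\Lambda_1,\Lambda_2,\Lambda_{02}$ can occur, and to decide which of them can actually be elementary and how they may follow one another. The engine of the whole argument is a reformulation of the elementary conditions (equations \ref{eq:top_same_parity} and \ref{eq:top_different_parities}) purely in terms of $X$. Writing $X_0=X(|\alpha_0,\rho_0|)$ and $X_2=X(|\alpha_2,\rho_2|)$, a direct substitution of the closed forms of $X$ (splitting on the parity of $\alpha$) turns those two equations into the single usable statement: a ternary block is elementary if and only if $X_2-X_0$ equals $0$ when $\alpha_0,\alpha_2$ share parity, $+1$ when $\alpha_0$ is even and $\alpha_2$ odd, and $-1$ when $\alpha_0$ is odd and $\alpha_2$ even. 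First I would record, for each of the five blocks, its first value ($\chi$ or $\chi'$), its total value $|\Delta|\in\{3\chi,\,2\chi+\chi',\,\chi+2\chi'\}$, and the resulting $X_0$, $X_2$ and parities; here I would use that $\Lambda_0,\Lambda_1,\Lambda_2$ all share the total $2\chi+\chi'$, and that $\Lambda_1,\Lambda_2$ even share $(\alpha_0,\rho_0)$ and $(\alpha_2,\rho_2)$, so they are elementary under exactly the same condition.

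Next I would establish the central linkage: $\Lambda$ is elementary if and only if $\Lambda_0$ is. Since $\chi'=\chi\pm 1$, the first and total values of $\Lambda_0$ differ by exactly $\pm 1$ from those of $\Lambda$, and $X$ moves by a single $\pm 1$ step under a unit change of its argument; tracking these two steps gives the equivalence (in the equal-parity case $X_2=X_0$ the two steps agree in sign, which is precisely what keeps $\Lambda_0$ elementary) and, at the same time, forces $\Lambda_1,\Lambda_2$ and $\Lambda_{02}$ to fail the condition in that regime. This yields a clean dichotomy. In the \emph{first} regime ($\Lambda$ and $\Lambda_0$ elementary, the other three not) the only available blocks are $\Lambda$ and $\Lambda_0$. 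In the \emph{second} regime ($\Lambda$, hence $\Lambda_0$, not elementary) every block must contain a correcting slice, and the only candidates are $\Lambda_1,\Lambda_2$ and possibly $\Lambda_{02}$.

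With the admissible block types pinned down, the global structure follows from balance (property \ref{property:slicing_elements} and, crucially, property \ref{property:slicing_gaps}); note that the stacking conditions are not needed to obtain this upper bound, since they can only shrink the list. In the first regime, because $\Lambda_1,\Lambda_2$ are not elementary, every correcting slice must sit in position $0$ of its block, i.e. at a position $\equiv 0 \pmod 3$; hence the runs of leading slices between consecutive correcting slices have length $\equiv 2 \pmod 3$, and two admissible gap-lengths differing by $1$ cannot both be $\equiv 2\pmod 3$. The gap is therefore constant, the slicing sequence periodic, and a constant gap means a constant number of $\Lambda$ blocks between successive $\Lambda_0$'s, so the minimal period is $\Lambda^\infty$, $\Lambda_0^\infty$, or $\Lambda^k\Lambda_0$. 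In the second regime every block carries one correcting slice, so the correcting frequency is at least $\tfrac{1}{3}$; once $\Lambda_{02}$ is excluded the frequency is exactly $\tfrac{1}{3}$, all gaps equal $2$, and a gap of $2$ preserves the position of the correcting slice modulo $3$, so all blocks are $\Lambda_1$ or all are $\Lambda_2$. Finally, periodicity of the slicing sequence is equivalent to rationality of $S$, which is exactly what theorem \ref{theorem:necessary_conditions} requires.

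The hard part will be the equal-parity claim together with the elimination of $\Lambda_{02}$. The one-step $X$-variation is transparent only away from the triangular boundaries $|\alpha,\alpha|$ and $|\alpha,0|$, where a unit change of the argument flips the parity of $\alpha$ and reverses the sign of the $X$-step; these cases must be treated separately, and proposition \ref{prop:elementary_rho2} (which already forbids $\rho_2\in\{0,\alpha_2\}$) is the kind of tool that removes the degenerate ones. Eliminating $\Lambda_{02}$ is the most delicate point: a block with two correcting slices forces a gap of length $1$ between them, compatible with balance only when correcting slices are very frequent, and I expect to close this by showing that the elementary condition for $\Lambda_{02}$ is then incompatible with the conditions forced on its neighbours, so that $\Lambda_{02}$ can never be inserted into an otherwise admissible sequence.
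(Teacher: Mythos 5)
Your $X$-reformulation of the elementarity conditions is correct (it is an exact restatement of equations \ref{eq:top_same_parity} and \ref{eq:top_different_parities}), and your mod-$3$ gap argument in the first regime is sound; it even buys something the paper obtains differently, since constancy of the gap yields periodicity and rationality directly, whereas the paper proves rationality via an equidistribution argument (density of $T_y(3k) \bmod 1$) combined with its forbidden-pairs proposition \ref{prop:not_both_elementary}. The genuine gap is your ``central linkage'': the equivalence ``$\Lambda$ is elementary iff $\Lambda_0$ is'' is false, and the triangular-boundary cases you defer are not degeneracies that some analogue of proposition \ref{prop:elementary_rho2} will remove --- that proposition constrains $\rho_2$ only, and nothing constrains position $0$. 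Proposition \ref{prop:both_elementary} states the true fact: $\Lambda$ and $\Lambda_{0\ast}$ are elementary together iff the same-parity condition holds \emph{and} $d_\ast(\alpha_0,\rho_0)=(\alpha_0,\rho_0\ast 1)$. Concretely, for $\chi=232$, $\Lambda=((21,0),(29,28),(36,29))$: one checks $36-2\cdot 29=-22\neq 21=21-2\cdot 0$ (and $-22\neq -21+0+1$), so $\Lambda$ is not elementary, while $\Lambda_{0-}$, $\Lambda_{1-}$, $\Lambda_{2-}$ all are; conversely, for $\chi=144992$, $\Lambda=((538,0),(761,42),(932,197))$ is elementary but $\Lambda_{0-}$, whose index-$0$ coordinate is $d_-(538,0)=(537,537)$, is not.

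Because of this, your ``clean dichotomy'' misses exactly the regimes where the lemma has work to do. When $\Lambda_0,\Lambda_1,\Lambda_2$ are elementary and $\Lambda$ is not, your second-regime argument (``every block carries one correcting slice, so once $\Lambda_{02}$ is excluded all gaps equal $2$'') does not rule out mixed periods such as $\Lambda_0\Lambda_1$ or $\Lambda_1^k\Lambda_0$, which consist entirely of one-correcting-slice blocks and which the lemma must forbid. They can be excluded --- $(\Lambda_0\Lambda_1)^\infty$ has gaps $3,1,3,1,\dots$, contradicting property \ref{property:slicing_gaps}, and any $\Lambda_2\Lambda_0$ adjacency puts two correcting slices next to each other --- but your proposal never performs this analysis; the paper handles all regimes uniformly by combining proposition \ref{prop:not_both_elementary} with a case analysis on $\lambda_1$, and never needs any linkage between $\Lambda$ and $\Lambda_0$. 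Finally, the elimination of $\Lambda_{02}$, which you only ``expect to close'', is the hardest computation in the paper's proof (the pairs $(\Lambda_1,\Lambda_{02})$ and $(\Lambda_2,\Lambda_{02})$ in proposition \ref{prop:not_both_elementary}: elementarity of $\Lambda_1$ or $\Lambda_2$ forces $\alpha_0,\alpha_2$ to have different parities, after which every $d_\ast$ variant of the condition for $\Lambda_{02}$ contradicts $\alpha_2-2\rho_2=-\alpha_0+2\rho_0+1$); as submitted, both the period-$1$ and the period-$>1$ statements therefore remain unproven.
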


			Again, since there cannot be two consecutive $\chi'$ in the slicing sequence, $\Lambda_{02}$ cannot be the only element of the elementary sequence.
			We now assume the elementary sequence has at least two distinct elements. The strategy for proving lemma \ref{lemma:necessary_conditions}
			is to highlight pairs of blocks which cannot be elementary together and to show that at least one of these pairs necessarily appears
			in each case not covered by lemma \ref{lemma:necessary_conditions}.

			\begin{Proposition}\label{prop:not_both_elementary}
				If $(\Delta, \Delta')$ is an element of
				\begin{align*}
					\{(\Lambda, \Lambda_1), (\Lambda, \Lambda_2), (\Lambda_0, \Lambda_{02}), (\Lambda_1, \Lambda_{02}), (\Lambda_2, \Lambda_{02})\}
				\end{align*}
				then $\Delta$ and $\Delta'$ cannot be elementary together.
			\end{Proposition}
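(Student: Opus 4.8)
\emph{Setup and reduction to three types.} The plan is to control each block through two arithmetic invariants of $X$ and then confront them with Proposition \ref{prop:elementary_rho2}. Writing $\chi'=\chi+\delta$ with $\delta\in\{-1,+1\}$ (the leading and correcting slices differ by $1$ by Property \ref{property:slicing_elements}), I would first record for each block $\Delta=(\chi_0,\chi_1,\chi_2)$ the two relevant cumulative sums $m_0=|\alpha_0,\rho_0|=\chi_0$ and $m_2=|\alpha_2,\rho_2|=\chi_0+\chi_1+\chi_2$, and write $\alpha(m)$ for the level of $m$, i.e. the $\alpha$ with $m=|\alpha,\rho|$. A direct computation gives $(m_0,m_2)=(\chi,3\chi)$ for $\Lambda$, $(\chi,3\chi+\delta)$ for both $\Lambda_1$ and $\Lambda_2$, $(\chi+\delta,3\chi+\delta)$ for $\Lambda_0$, and $(\chi+\delta,3\chi+2\delta)$ for $\Lambda_{02}$. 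The five pairs then fall into two structural types: those sharing a common $m_0$ with consecutive $m_2$ (namely $(\Lambda,\Lambda_1)$, $(\Lambda,\Lambda_2)$, $(\Lambda_0,\Lambda_{02})$), and the pairs $(\Lambda_1,\Lambda_{02})$, $(\Lambda_2,\Lambda_{02})$ in which both $m_0$ and $m_2$ increase by $\delta$.

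\emph{Two tools.} The first is the parity invariant $X(|\alpha,\rho|)\equiv|\alpha,\rho|\pmod 2$, which is immediate from the definition of $X$: writing $S_\alpha=\sum_{0<n\le\alpha}(-1)^n n$, both $S_\alpha-\tfrac{\alpha(\alpha+1)}{2}$ and $((-1)^{\alpha+1}-1)(\rho+1)$ are even. The second is a reformulation of the elementary condition: translating equations \ref{eq:top_same_parity} and \ref{eq:top_different_parities} through the explicit value of $X$ shows that an elementary block satisfies $X(m_2)-X(m_0)=0$ when $\alpha_0\equiv\alpha_2\pmod 2$ and $X(m_2)-X(m_0)=(-1)^{\alpha_0}$ otherwise. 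Combining this with the parity invariant yields the key necessary condition $(\mathrm N)$: for an elementary block, $\alpha_0\equiv\alpha_2\pmod 2$ holds if and only if $m_2-m_0$ is even. I would also restate Proposition \ref{prop:elementary_rho2} in this language: since $\rho_2\notin\{0,\alpha_2\}$, the integer $m_2$ is never a turning point $|\alpha,\alpha|$ or $|\alpha+1,0|$; consequently two consecutive integers that are both non-turning lie in the same level, while $X$ changes by exactly $\pm1$ at each unit step.

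\emph{The common-$m_0$ pairs.} For $(\Lambda,\Lambda_1)$, $(\Lambda,\Lambda_2)$ and $(\Lambda_0,\Lambda_{02})$ the two $m_2$-values are consecutive, hence of opposite parity, so by $(\mathrm N)$ exactly one block requires $\alpha(m_2)\equiv\alpha_0$ and the other $\alpha(m_2)\not\equiv\alpha_0$. Their levels then have different parities, which forces the two consecutive integers into different levels; but that makes both of them turning points, contradicting Proposition \ref{prop:elementary_rho2} for either block. This settles these three pairs at once.

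\emph{The main obstacle: $(\Lambda_1,\Lambda_{02})$ and $(\Lambda_2,\Lambda_{02})$.} Here both $m_0$ and $m_2$ shift, and the difference $m_2-m_0=2\chi+\delta$ is odd for each block, so $(\mathrm N)$ forces $\alpha_0\not\equiv\alpha_2$ in both. The hard part is that no single turning-point argument applies, so I would proceed in two stages. First, the two $m_2$-values $3\chi+\delta$ and $3\chi+2\delta$ are consecutive non-turning integers (by Proposition \ref{prop:elementary_rho2}), hence share a level $a_2$; and since $\alpha(\chi)\not\equiv a_2$ and $\alpha(\chi+\delta)\not\equiv a_2$, the consecutive integers $\chi,\chi+\delta$ have same-parity levels and therefore share a level $a_0$, with $a_0\not\equiv a_2\pmod 2$. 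Second, I would difference the two $X$-forms of the elementary condition: both blocks have $\alpha_0\not\equiv\alpha_2$ and the same $a_0$, so their right-hand sides $(-1)^{a_0}$ cancel, leaving $[X(3\chi+2\delta)-X(3\chi+\delta)]-[X(\chi+\delta)-X(\chi)]=0$. As all four points lie inside their levels, the bracketed steps equal $\delta(-1)^{a_2+1}$ and $\delta(-1)^{a_0+1}$, and their equality forces $a_0\equiv a_2$ — the desired contradiction. The delicate point throughout is the bookkeeping of the level and parity of each $m_i$, which the parity invariant and Proposition \ref{prop:elementary_rho2} are precisely tailored to pin down.
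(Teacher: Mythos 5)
Your argument is correct, and it reaches the conclusion by a genuinely different route than the paper. The paper's proof is a direct manipulation of equations \ref{eq:top_same_parity} and \ref{eq:top_different_parities}: for the three common-$m_0$ pairs it observes that $(\alpha_2',\rho_2')=d_\ast(\alpha_2,\rho_2)$ and shows coexistence forces $\rho_2=0$, against Proposition \ref{prop:elementary_rho2}; for $(\Lambda_1,\Lambda_{02})$ and $(\Lambda_2,\Lambda_{02})$ it first excludes $\alpha_0\equiv\alpha_2\pmod 2$ via the quadratic identity $\frac{(\alpha_2+1)^2-(\alpha_0+1)^2}{2}=2\chi\ast 1$, and then branches into three explicit cases according to whether $d_\ast(\alpha_0,\rho_0)$ stays inside the level or crosses to $(\alpha_0-1,\alpha_0-1)$ or $(\alpha_0+1,0)$, contradicting $\alpha_2-2\rho_2=-\alpha_0+2\rho_0+1$ in each. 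You instead package the arithmetic into two reusable invariants — the parity law $X(m)\equiv m\pmod 2$ and the reading of the elementary condition as $X(m_2)-X(m_0)\in\{0,(-1)^{\alpha_0}\}$, both of which I verified from $X(|\alpha,\rho|)=\frac{\alpha-2\rho}{2}-1$ ($\alpha$ even) and $X(|\alpha,\rho|)=\frac{-\alpha+2\rho+1}{2}$ ($\alpha$ odd) — and their combination, your condition $(\mathrm N)$, carries exactly the content of the paper's parity computations, since $m_2-m_0=\frac{\alpha_2(\alpha_2+1)-\alpha_0(\alpha_0+1)}{2}+\rho_2-\rho_0$. The payoff shows in the hard pairs: where the paper branches over the boundary behavior of $d_\ast(\alpha_0,\rho_0)$, your parity bookkeeping forces $\chi$ and $\chi+\delta$ into a common level $a_0$ outright, so the boundary cases never arise, and differencing the two $X$-identities cancels the right-hand sides $(-1)^{a_0}$ simultaneously, yielding $a_0\equiv a_2$ against $a_0\not\equiv a_2$; likewise your turning-point reading of Proposition \ref{prop:elementary_rho2} (elementary blocks have non-turning $m_2$, and consecutive integers in different levels are both turning points) cleanly replaces the paper's $\rho_2=0$ reduction for the common-$m_0$ pairs. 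In short: the paper's proof is shorter to set up and entirely self-contained, while yours costs two preliminary lemmas but is branch-free, and the invariants $X(m)\equiv m\pmod 2$ and $(\mathrm N)$ would be reusable elsewhere, e.g.\ in the offset bookkeeping of section \ref{subsec:stackability}.
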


			\begin{proof}
				The argument for the first three pairs is the same. Assume
				\[(\Delta, \Delta')\in \{(\Lambda, \Lambda_1), (\Lambda, \Lambda_2), (\Lambda_0, \Lambda_{02})\}\]
				define $\Delta = ((\alpha_i, \rho_i))_{0\le i\le 2}$ and $\Delta' = ((\alpha_i', \rho_i'))_{0\le i\le 2}$
				and observe that $\Delta'$ is such that we have, for some $\ast\in \{-, +\}$,
				\begin{align*}
					(\alpha_0', \rho_0') &= (\alpha_0, \rho_0) & (\alpha_2', \rho_2') &= d_\ast(\alpha_2, \rho_2)
				\end{align*}

				\noindent
				We possibly swap $\Delta$ and $\Delta'$ in order to be able to assume $(\alpha_2', \rho_2') = d_-(\alpha_2, \rho_2)$.
				According to equations \ref{eq:top_same_parity} and \ref{eq:top_different_parities}, $\Delta$ and
				$\Delta'$ can only be elementary together if $\rho_2 = 0$, which is impossible from proposition \ref{prop:elementary_rho2}.\\

				What follows is true for any
				\[(\Delta, \Delta')\in \{(\Lambda_1, \Lambda_{02}), (\Lambda_2, \Lambda_{02})\}\]
				We assume $\Delta$ is elementary and show that $\alpha_0$ and $\alpha_2$ must then have different parities.
				Set $\Delta = ((\alpha_i, \rho_i))_{0\le i\le 2}$ and observe that
				\begin{align*}
					\frac{\alpha_0(\alpha_0+1)}{2} + \rho_0 + 1 &= \chi\\
					\frac{\alpha_2(\alpha_2+1)}{2} + \rho_2 + 1 &= 3\chi \ast 1
				\end{align*}
				for some $\ast\in \{-, +\}$. If $\alpha_0$ and $\alpha_2$ have the same parity,
				then equation \ref{eq:top_same_parity} applies and we have $\alpha_2 - 2\rho_2 = \alpha_0 - 2\rho_0$.
				With the equations above, we deduce
				\[\frac{(\alpha_2+1)^2 - (\alpha_0+1)^2}{2} = 2\chi \ast 1\]
				which is impossible since $\alpha_0$ and $\alpha_2$ having the same parity means the left hand side must be a multiple of $2$.

				We conclude that equation \ref{eq:top_different_parities} applies and we have $\alpha_2 - 2\rho_2 = -\alpha_0 + 2\rho_0 + 1$.
				With $\Lambda_{02} = ((\alpha_i', \rho_i'))_{0\le i\le 2}$, observe that we have, for some $\ast\in \{-, +\}$,
				\begin{align*}
					(\alpha_0', \rho_0') &= d_\ast(\alpha_0, \rho_0) & (\alpha_2', \rho_2') &= d_\ast(\alpha_2, \rho_2)
				\end{align*}
				From proposition \ref{prop:elementary_rho2}, $\rho_2\notin \{0, \alpha_2\}$, and $\Lambda_{02}$ can only be elementary if
				\begin{align*}
					\alpha_2 - 2(\rho_2\ast 1) &= -\alpha_0 + 2(\rho_0\ast 1) + 1 & \text{if}~ &(\alpha_0', \rho_0') = (\alpha_0, \rho_0\ast 1)\\
					\alpha_2 - 2(\rho_2 - 1) &= (\alpha_0-1) - 2(\alpha_0-1) & \text{if}~ &(\rho_0 = 0)\\
					&&\text{and}~ &(\alpha_0', \rho_0') = (\alpha_0-1, \alpha_0-1)\\
					\alpha_2 - 2(\rho_2 + 1) &= (\alpha_0+1) - 2(0) & \text{if}~ &(\rho_0 = \alpha_0)\\
					&&\text{and}~ &(\alpha_0', \rho_0') = (\alpha_0+1, 0)
				\end{align*}
				all of which contradict $\alpha_2 - 2\rho_2 = -\alpha_0 + 2\rho_0 + 1$.
			\end{proof}

			\begin{Proposition}\label{prop:periodic_elementary_sequence}
				An elementary slope is rational and has a periodic elementary sequence.
			\end{Proposition}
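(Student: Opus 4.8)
The plan is to determine which of the five blocks $\Lambda, \Lambda_0, \Lambda_1, \Lambda_2, \Lambda_{02}$ can simultaneously occur in a single elementary sequence, and then to read off both periodicity and rationality from the forced positions of the correcting slices. Since every block of an elementary sequence is by definition an elementary block, Proposition \ref{prop:not_both_elementary} forbids any of the listed pairs from occurring together. Viewing the five types as vertices and the forbidden pairs as edges, the set of types that actually appear must be an independent set, and a short case check shows the only maximal such sets are $\{\Lambda, \Lambda_0\}$, $\{\Lambda, \Lambda_{02}\}$ and $\{\Lambda_0, \Lambda_1, \Lambda_2\}$. It therefore suffices to treat these three families.

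The key tool in each case is Property \ref{property:slicing_gaps}: in the slicing sequence the runs of consecutive leading slices $\chi$ between two correcting slices $\chi'$ have only the two consecutive lengths $\lambda_1$ and $\lambda_1+1$. I would compute these run lengths from the block data. For $\{\Lambda, \Lambda_{02}\}$, the two correcting slices inside a single $\Lambda_{02}$ are separated by exactly one $\chi$, while any two copies of $\Lambda_{02}$ separated by $k$ copies of $\Lambda$ produce a run of length $3k$ (and $k=0$ would create two consecutive $\chi'$, which is impossible); since $1$ and $3k$ can never be consecutive integers, this family does not occur. For $\{\Lambda, \Lambda_0\}$, each correcting slice sits at the start of a $\Lambda_0$, so $k$ copies of $\Lambda$ between consecutive copies of $\Lambda_0$ give a run of length $2+3k$; as the admissible run lengths differ by at most one while $2+3k$ varies by multiples of three, $k$ is forced to be constant, and the block sequence is exactly $(\Lambda^k \Lambda_0)^\infty$.

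For the remaining clique $\{\Lambda_0, \Lambda_1, \Lambda_2\}$, each block carries a single correcting slice at local position $p_i \in \{0,1,2\}$ (for $\Lambda_0, \Lambda_1, \Lambda_2$ respectively), so the run between consecutive blocks has length $2 - p_i + p_{i+1}$. The two-consecutive-values constraint confines each difference $p_{i+1}-p_i$ to a fixed two-element set of consecutive integers; the only viable possibilities are that the sequence $(p_i)$ be non-increasing or non-decreasing, and since it is bounded in $\{0,1,2\}$ it must be eventually constant. Thus the block sequence is eventually a single repeated block of $\{\Lambda_0, \Lambda_1, \Lambda_2\}$.

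The last step is to turn eventual periodicity into the statement of the proposition. In every occurring case the positions of the correcting slices, and therefore the whole slicing sequence, are eventually periodic; since the slicing sequence is balanced, eventual periodicity forces the relative frequency of $\chi$ and $\chi'$ to be rational, whence $S$ is rational and the slicing sequence (and therefore the elementary sequence) is genuinely periodic. I expect the main obstacle to be the gap computation for the clique $\{\Lambda_0, \Lambda_1, \Lambda_2\}$: one must argue carefully that a bounded integer sequence whose consecutive differences lie in a two-element set of consecutive integers cannot oscillate forever, and then confirm that the resulting eventual periodicity is compatible with balancedness so that genuine periodicity of the slicing sequence follows.
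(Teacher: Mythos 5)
Your proof is essentially correct, but it takes a genuinely different route from the paper. The paper's own proof of rationality is dynamical: assuming $S$ irrational, density of $\{T_y(3k) \pmod 1\}$ in $[0,S]$ forces the slope to be primary for an elementary block at \emph{every} initial ordinate; picking two ordinates straddling $y_2$ but not $y_0$ produces two blocks with equal index-$0$ coordinate and distinct index-$2$ coordinates, which proposition \ref{prop:not_both_elementary} forbids from being elementary together. Periodicity is then a one-liner: for $S=\frac{p}{q}$, take $k=3q$ so that $T_y(3q)-T_y(0)=3p$ is an integer divisible by $3$, giving \emph{pure} periodicity immediately. Your combinatorial run-length analysis via property \ref{property:slicing_gaps} instead front-loads the work the paper defers to its subsequent propositions: your three maximal independent sets and the gap computations ($3k$ versus $1$ for $\{\Lambda,\Lambda_{02}\}$, the forced constant $2+3k$ for $\{\Lambda,\Lambda_0\}$, the monotone bounded sequence $(p_i)$ for $\{\Lambda_0,\Lambda_1,\Lambda_2\}$) are all sound and essentially reprove lemma \ref{lemma:necessary_conditions} en route, so your argument buys the explicit form of the periods as a bonus, at the cost of more casework. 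The one soft spot is your closing step: balancedness plus eventual periodicity does \emph{not} in general imply genuine periodicity --- the skew sequences of \cite{MH} are balanced, eventually periodic and not periodic --- so that implication needs the paper's standing convention that orbits avoid $\mathds{Z}^2$ (which excludes skew words), or, more simply, once rationality is in hand you should replace it by the paper's direct computation $T_y(3q)-T_y(0)=3p\in 3\mathds{Z}$, which yields pure periodicity of the elementary sequence in one line. Also note that in the $\{\Lambda,\Lambda_0\}$ case your conclusion ``exactly $(\Lambda^k\Lambda_0)^\infty$'' should allow for a truncated initial run; this does not affect rationality or the final periodicity, but as stated it slightly overreaches what the run-length constraint alone gives.
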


			\begin{proof}
				Suppose $S\in (0, 1)$ is an irrational slope that is elementary for some
				initial ordinate $T_y(0)$ with $T_y$ being its traveling ordinate. From the irrationality of $S$, we have
				\[\overline{\{T_y(3k) \pmod{1}, k\in \mathds{N}\}} = [0, S]\]
				with the overline denoting the Euclidian metric completion. This tells us the slope must be primary for some elementary block
				for any initial ordinate.

				We define $y_i$, for any $0\le i\le 2$, as the unique element of $(0, S)$ such that
				\[y_i = (i+1) - N_i S\]
				for some integer $N_i$ and observe that $y_0\ne y_2$, again from irrationality.

				Pick two initial ordinates $y$ and $y'$ such that $y < y_2 < y'$ and $y_0\notin [y, y']$.
				The blocks $\Delta, \Delta'$ such that $S$ is $\Delta$-primary and $\Delta'$-primary for initial ordinates $y$ and $y'$ respectively
				have the same index $0$ coordinate but different index $2$ coordinate. From proposition \ref{prop:not_both_elementary},
				they cannot be elementary together and we conclude that $S$ must be rational.\\

				Picking an elementary slope $\frac{p}{q}$ and noting $T_y$ its traveling ordinate, proving the elementary sequence is periodic
				comes down to finding an integer $k$ for which $T_y(k)-T_y(0)$ is an integer divisible by $3$. We can simply choose $k=3q$.
			\end{proof}

			A direct consequence of this proposition is that the set of elementary orbits is a subset of relatively periodic orbits
			(see section \ref{sec:definitions} for the definition). We also note that all elementary orbits can be covered by a single half-strip.

			\begin{Proposition}
				Neither $\Lambda_1 = (\chi, \chi', \chi)$ nor $\Lambda_2 = (\chi, \chi, \chi')$ nor $\Lambda_{02} = (\chi', \chi, \chi')$
				can be an element of an elementary sequence with minimal period's length $>1$.
			\end{Proposition}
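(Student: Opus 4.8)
The plan is to combine two ingredients already at our disposal: the incompatibility list of Proposition \ref{prop:not_both_elementary}, which restricts which of the five candidate blocks may occur together in a single elementary sequence, and the run structure of Property \ref{property:slicing_gaps}, which forces every maximal run of consecutive leading slices $\chi$ to have length $\lambda_1$ or $\lambda_1+1$ while no two correcting slices $\chi'$ are adjacent. I will dispatch $\Lambda_1,\Lambda_2$ together by an averaging argument on the gaps, and $\Lambda_{02}$ separately by a direct run-length contradiction.

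For $\Lambda_1$ and $\Lambda_2$, first I would invoke Proposition \ref{prop:not_both_elementary}: since $(\Lambda,\Lambda_1),(\Lambda,\Lambda_2),(\Lambda_1,\Lambda_{02}),(\Lambda_2,\Lambda_{02})$ are all forbidden, any elementary sequence containing $\Lambda_1$ or $\Lambda_2$ may only use blocks from $\{\Lambda_0,\Lambda_1,\Lambda_2\}$. Each of these carries exactly one correcting slice, sitting at position $r\in\{0,1,2\}$ inside the block ($r=0$ for $\Lambda_0$, $r=1$ for $\Lambda_1$, $r=2$ for $\Lambda_2$). Writing the minimal period as $\Delta_0,\dots,\Delta_{L-1}$ with types $r_0,\dots,r_{L-1}$, the unique $\chi'$ of block $n$ lies at slicing index $3n+r_n$, and every element strictly between two consecutive such $\chi'$ is a $\chi$, so the intervening maximal run of leading slices has length
\[
g_n = \bigl(3(n+1)+r_{n+1}\bigr) - (3n+r_n) - 1 = 2 + r_{n+1} - r_n,
\]
with indices read cyclically modulo $L$. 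Summing over one period telescopes to $\sum_n g_n = 2L$, so the $g_n$ average to $2$; being confined by Property \ref{property:slicing_gaps} to the two consecutive values $\{\lambda_1,\lambda_1+1\}$, this forces $\lambda_1\in\{1,2\}$ and every $g_n=2$, i.e. $r_{n+1}=r_n$ for all $n$. Hence all types coincide, the minimal period has length $1$, and neither $\Lambda_1$ nor $\Lambda_2$ can appear in a period of length $>1$.

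For $\Lambda_{02}$, Proposition \ref{prop:not_both_elementary} rules out the pairs $(\Lambda_0,\Lambda_{02}),(\Lambda_1,\Lambda_{02}),(\Lambda_2,\Lambda_{02})$, so the only block it could share an elementary sequence with is $\Lambda$. But $\Lambda=(\chi,\chi,\chi)$ exhibits three consecutive leading slices, so the maximal run containing them has length $\ge 3$ and Property \ref{property:slicing_gaps} gives $\lambda_1\ge 2$; whereas $\Lambda_{02}=(\chi',\chi,\chi')$ exhibits a single $\chi$ bounded on both sides by correcting slices, a maximal run of length exactly $1$, forcing $\lambda_1\le 1$. These are incompatible, so $\Lambda$ and $\Lambda_{02}$ cannot coexist. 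Since $\Lambda_{02}$ alone is already excluded (its repetition would place two $\chi'$ in succession) and a minimal period of length $>1$ necessarily contains at least two distinct block values, $\Lambda_{02}$ cannot occur at all, in particular not in a period of length $>1$.

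The main obstacle is the averaging step in the $\{\Lambda_0,\Lambda_1,\Lambda_2\}$ case: one must take care to include the wrap-around gap so that the telescoping genuinely yields $2L$, and to verify that run-lengths restricted to two consecutive integers with mean exactly $2$ admit only the constant solution $g_n\equiv 2$. The ancillary points—that every element between two consecutive $\chi'$ is indeed a $\chi$, the run-length readings for $\Lambda$ and $\Lambda_{02}$, and the fact that minimal period $>1$ entails at least two distinct blocks—are routine.
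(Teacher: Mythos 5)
Your proof is correct, but it reaches the conclusion by a genuinely different mechanism than the paper. The paper proves the statement for $\Lambda_1$ and $\Lambda_2$ by a case analysis on $\lambda_1$: for $\lambda_1=1$, $\lambda_1=2$ and $\lambda_1\ge 3$ it exhibits explicit finite subsequences (such as $\Lambda_1(\chi',\chi,\chi)^k\Lambda_{02}$, $\Lambda_1(\chi,\chi,\chi')^k\Lambda$, $\Lambda_1\Lambda$ or $\Lambda_1\Lambda_2\Lambda$, and $\Lambda_2\Lambda$ or $\Lambda_2\Lambda_1$) each of which contains one of the forbidden pairs of Proposition \ref{prop:not_both_elementary}; it handles $\Lambda_{02}$ by reduction, observing via Property \ref{property:slicing_gaps} that the block following $\Lambda_{02}$ must be $\Lambda_1$ or $\Lambda_2$. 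You instead replace the entire case analysis by a single global counting argument: once Proposition \ref{prop:not_both_elementary} confines the sequence to $\{\Lambda_0,\Lambda_1,\Lambda_2\}$, each block carries exactly one $\chi'$, so the cyclic gaps between consecutive correcting slices telescope to total $2L$ over a period of length $L$; since Property \ref{property:slicing_gaps} confines each gap to the two consecutive values $\{\lambda_1,\lambda_1+1\}$ and their mean is exactly $2$, every gap equals $2$, the block type is constant, and the minimal period collapses to $1$. You also dispatch $\Lambda_{02}$ directly, via the incompatibility $\lambda_1\ge 2$ (forced by any occurrence of $\Lambda$) against $\lambda_1\le 1$ (forced by the length-one run inside $\Lambda_{02}$), rather than by reduction to the $\Lambda_1,\Lambda_2$ cases. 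Both arguments rest on the same two pillars — Proposition \ref{prop:not_both_elementary}, Property \ref{property:slicing_gaps}, together with the periodicity granted by Proposition \ref{prop:periodic_elementary_sequence} (which your cyclic summation needs and the hypothesis supplies) — but your averaging route is shorter, avoids the paper's slightly delicate ``the number of consecutive $\chi$ cannot stay constant at $2$'' bookkeeping, and yields the somewhat stronger intermediate fact that any elementary sequence drawn from $\{\Lambda_0,\Lambda_1,\Lambda_2\}$ must be constant; the paper's enumeration, in exchange, is constructive in that it names the forbidden pattern that actually occurs in each regime. You were right to flag the wrap-around gap as the delicate point, and you handled it correctly ($r_L=r_0$ makes the telescoping exact).
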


			\begin{proof}
				We first observe that as a consequence of property \ref{property:slicing_gaps}, the element following $\Lambda_{02}$
				is either $\Lambda_1$ or $\Lambda_2$ in any elementary sequence. We then only need to prove the proposition for $\Lambda_1$ and $\Lambda_2$.
				With \ref{prop:not_both_elementary}, all we need to prove is that an elementary sequence with minimal period's length $>1$
				that contain either $\Lambda_1$ or $\Lambda_2$ necessarily contains either $\Lambda$ or $\Lambda_{02}$.

				We define $\lambda_1$ as in section \ref{subsec:slicing_sequence}.
				The argument is given by property \ref{property:slicing_gaps}: the number of consecutive $\chi$ is either $\lambda_1$ or $\lambda_1+1$.
				We first assume $(\chi_n)_{n\in \mathds{N}}$ contains $\Lambda_1$. Observe that $\lambda_1\ge 1$ by definition.
				\begin{itemize}
					\item If $\lambda_1=1$, there exists an integer $k\ge 0$ such that
						\[(\chi, \chi', \chi) (\chi', \chi, \chi)^k (\chi', \chi, \chi') = \Lambda_1 (\chi', \chi, \chi)^k \Lambda_{02}\]
						is a finite subsequence of $(\chi_n)_{n\in \mathds{N}}$.

					\item If $\lambda_1=2$, the number of consecutive $\chi$ cannot stay constant at $2$,
						otherwise the elementary sequence has period $1$. We deduce there exists an integer $k > 0$ such that
						\[(\chi, \chi', \chi) (\chi, \chi, \chi')^k (\chi, \chi, \chi) = \Lambda_1 (\chi, \chi, \chi')^k \Lambda\]
						is a finite subsequence of $(\chi_n)_{n\in \mathds{N}}$.

					\item If $\lambda_1\ge 3$, then either
						\[(\chi, \chi', \chi) (\chi, \chi, \chi) = \Lambda_1 \Lambda\]
						or
						\[(\chi, \chi', \chi) (\chi, \chi, \chi') (\chi, \chi, \chi) = \Lambda_1 \Lambda_2 \Lambda\]
						is a finite subsequence of $(\chi_n)_{n\in \mathds{N}}$.
				\end{itemize}

				\noindent
				We now assume that $(\chi_n)_{n\in \mathds{N}}$ contains $\Lambda_2$. The number of consecutive $\chi$
				cannot stay constant at $2$, otherwise the elementary sequence has period $1$.
				\begin{itemize}
					\item If $\lambda_1=1$,
						\[(\chi, \chi, \chi') (\chi, \chi', \chi) = \Lambda_2 \Lambda_1\]
						is a finite subsequence of $(\chi_n)_{n\in \mathds{N}}$, and we go back to the previous case.

					\item If $\lambda_1\ge 2$,
						\[(\chi, \chi, \chi') (\chi, \chi, \chi) = \Lambda_2 \Lambda\]
						is a finite subsequence of $(\chi_n)_{n\in \mathds{N}}$.
				\end{itemize}
			\end{proof}

			It follows than an elementary sequence with minimal period's length $>1$ can only contain $(\chi, \chi, \chi)$ and $(\chi', \chi, \chi)$.
			The following property restrict the form of the period even further, and is a direct consequence of property \ref{property:slicing_gaps}.
			\begin{Property}
				If an elementary sequence has minimal period's length $>1$, then its minimal period only contains a single
				occurence of $(\chi', \chi, \chi)$.
			\end{Property}

			This concludes the proof of lemma \ref{lemma:necessary_conditions}.
			The single occurence of $(\chi', \chi, \chi)$ also means there is a single occurence of $\chi'$ in the slicing sequence's period,
			which means $q$ is congruent to either $1$ or $p-1$ modulo $p$, as stated in theorem \ref{theorem:necessary_conditions}.

		\subsection{Finding elementary blocks}\label{subsec:finding_elementary_blocks}
			The aim of this section is to give conditions for the blocks used in lemma \ref{lemma:necessary_conditions} to be elementary.
			All questions regarding stackability are kept for section \ref{subsec:stackability}.

			We note that a slope $\frac{1}{\chi}$ is necessarily primary for a single ternary block $\Delta = (\chi, \chi, \chi)$,
			where $\chi$ is the leading (and only) slice of $S$.
			For such a slope, we define, for any $\ast\in \{-, +\}$ and any positive integer $J$ the slopes
			\[\Gamma_{J\ast} = \frac{3J}{3J\chi\ast 1}\]
			and the ternary blocks
			\begin{align*}
				\Lambda &= (\chi, \chi, \chi) & \Lambda_{0\ast} &= (\chi\ast 1, \chi, \chi)\\
				\Lambda_{1\ast} &= (\chi, \chi\ast 1, \chi) & \Lambda_{2\ast} &= (\chi, \chi, \chi\ast 1)
			\end{align*}

			We want to give conditions on $\Lambda$ for $\Lambda, \Lambda_{0\ast}, \Lambda_{1\ast}$ or $\Lambda_{2\ast}$ to be elementary.
			As a first step, we must check that all of these are ternary blocks, meaning they accept primary slopes.
			We already know that $\frac{1}{\chi}$ is primary for $\Lambda$. For the other three,
			following the equations of \ref{eq:slope_existence_chi}, one may check the following property.
			\begin{Property}\label{property:all_ternary}
				For any $J\in \mathds{N}^*$, $\Gamma_{J-}$ is primary for $\Lambda, \Lambda_{2-}, \Lambda_{1-}, \Lambda_{0-}$
				respectively with any initial ordinates $y, y_{2-}, y_{1-}, y_{0-}$ such that
				\[0 <  y < \frac{3J-3}{3J\chi-1} < y_{2-} <  \frac{3J-2}{3J\chi-1} < y_{1-} < \frac{3J-1}{3J\chi-1} < y_{0-} < \frac{3J}{3J\chi-1}\]
				and $\Gamma_{J+}$ is primary for $\Lambda_{0+}, \Lambda_{1+}, \Lambda_{2+}, \Lambda$
				respectively with any initial ordinates $y_{0+}, y_{1+}, y_{2+}, y$ such that
				\[0 < y_{0+} < \frac{1}{3J\chi+1} < y_{1+} < \frac{2}{3J\chi+1} < y_{2+} < \frac{3}{3J\chi+1} < y < \frac{3J}{3J\chi+1}\]
				Neither $\Gamma_{1-}$ nor $\Gamma_{1+}$ is $\Lambda$-primary.
			\end{Property}

			We separate the various forms of elementary periods given in lemma \ref{lemma:necessary_conditions} into three groups.
			The first group only contains period $\Lambda$.
			\begin{Proposition}\label{prop:lambda_elementary}
				Block $\Lambda = (\chi, \chi, \chi) = ((\alpha_i, \rho_i))_{0\le i\le 2}$ is elementary
				if and only if $\alpha_2 - 2\rho_2 = \alpha_0 - 2\rho_0$.
			\end{Proposition}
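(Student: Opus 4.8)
My plan is to show that the ``different parities'' branch of the definition of an elementary block can never be realised by $\Lambda = (\chi,\chi,\chi)$, so that being elementary reduces exactly to equation \ref{eq:top_same_parity}. The reverse implication is immediate and in fact holds for any ternary block: if $\alpha_2 - 2\rho_2 = \alpha_0 - 2\rho_0$, then reducing modulo $2$ gives $\alpha_2 \equiv \alpha_0 \pmod 2$, so $\alpha_0$ and $\alpha_2$ have the same parity and equation \ref{eq:top_same_parity} is precisely the condition making $\Lambda$ elementary. For the forward implication, assume $\Lambda$ is elementary; if $\alpha_0$ and $\alpha_2$ have the same parity we are done by definition, so the whole content of the proposition is to exclude the case where $\alpha_0$ and $\alpha_2$ have different parities and equation \ref{eq:top_different_parities} holds.

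To rule that case out I would exploit the only piece of structure available, namely that for $\Lambda = (\chi,\chi,\chi)$ one has $|\alpha_0,\rho_0| = \chi$ and $|\alpha_2,\rho_2| = 3\chi$, whence $|\alpha_2,\rho_2| \equiv |\alpha_0,\rho_0| \pmod 2$ since $3\chi \equiv \chi$. The key auxiliary fact I would establish is that $X(n) \equiv n \pmod 2$ for every integer $n \ge 1$. This comes straight from the definition of $X$: a short computation gives $X(|\alpha,\rho|+1) - X(|\alpha,\rho|) = (-1)^{\alpha+1}$ when $\rho < \alpha$ and $(-1)^{\alpha}$ when $\rho = \alpha$, so consecutive values of $X$ always differ by $\pm 1$; since $X(1) = -1$, the parity of $X(n)$ equals that of $n$. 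Applying this to $|\alpha_0,\rho_0|$ and $|\alpha_2,\rho_2|$ then yields $X(|\alpha_0,\rho_0|) \equiv X(|\alpha_2,\rho_2|) \pmod 2$.

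The contradiction comes from reading equation \ref{eq:top_different_parities} through $X$. Expanding the definition of $X$ separately for even $\alpha$ (where $2X = \alpha - 2\rho - 2$) and for odd $\alpha$ (where $2X = -\alpha + 2\rho + 1$), the relation $\alpha_2 - 2\rho_2 = -\alpha_0 + 2\rho_0 + 1$ with $\alpha_0$ and $\alpha_2$ of different parities becomes equivalent to $X(|\alpha_2,\rho_2|) = X(|\alpha_0,\rho_0|) \pm 1$, which forces the two $X$-values to have opposite parities and contradicts the congruence of the previous paragraph. Hence the different-parities branch is impossible and equation \ref{eq:top_same_parity} must hold. I expect this exclusion to be the only genuine obstacle; a fully algebraic substitute, should the $X$-bookkeeping prove awkward, is to solve equation \ref{eq:top_different_parities} for $\rho_2$, insert it into $|\alpha_2,\rho_2| = 3|\alpha_0,\rho_0|$ to obtain $(\alpha_2+1)^2 = 3\alpha_0^2 + 2\alpha_0 + 8\rho_0 + 6$, and derive the same contradiction by reducing modulo $4$ in each mixed-parity subcase.
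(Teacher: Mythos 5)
Your proof is correct and complete, but the forward direction takes a genuinely different route from the paper's. The paper proceeds by direct substitution: from $\frac{\alpha_0(\alpha_0+1)}{2}+\rho_0+1=\chi$ and $\frac{\alpha_2(\alpha_2+1)}{2}+\rho_2+1=3\chi$ (equations \ref{eq:lambda_elementary}), assuming different parities and equation \ref{eq:top_different_parities}, it derives $\frac{\alpha_2^2-(\alpha_0+1)^2+2}{2}+2\rho_2=2\chi$, whose left-hand side is odd. That is the same odd-versus-even obstruction you exploit, but extracted by brute algebra, whereas you package it in the auxiliary lemma $X(n)\equiv n\pmod 2$. Your verification of that lemma is sound (the increments are $(-1)^{\alpha+1}$ for $\rho<\alpha$ and $(-1)^{\alpha}$ at $\rho=\alpha$, and $X(1)=-1$), your closed forms $2X=\alpha-2\rho-2$ for even $\alpha$ and $2X=-\alpha+2\rho+1$ for odd $\alpha$ check out, and they do convert equation \ref{eq:top_different_parities} in the mixed-parity case into $X(|\alpha_2,\rho_2|)=X(|\alpha_0,\rho_0|)\pm 1$, which contradicts $3\chi\equiv\chi\pmod 2$. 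What your route buys is a geometric reading (the parity of the traveling abscissa is locked to the horizontal distance traveled, so the third horizontal edge cannot sit one unit off from the first) and a reusable tool: the same lemma would streamline the nearly identical parity substitutions the paper repeats in propositions \ref{prop:lambda0_elementary} and \ref{prop:lambda1_lambda2_elementary}, at the cost of being longer in this single instance. Your reverse direction is in fact simpler and more general than the paper's: reducing $\alpha_2-2\rho_2=\alpha_0-2\rho_0$ modulo $2$ yields the same-parity condition for any ternary block, while the paper detours through the identity $\frac{(\alpha_2+1)^2-(\alpha_0+1)^2}{2}=2\chi$, which is more than is needed. Finally, your algebraic fallback $(\alpha_2+1)^2=3\alpha_0^2+2\alpha_0+8\rho_0+6$ with reduction modulo $4$ is also correct and is essentially the paper's own computation in a different normal form, so every branch of your argument lands on solid ground.
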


			\begin{proof}
				Observe that we have, by definition,
				\begin{equation}\label{eq:lambda_elementary}
					\begin{aligned}
						\frac{\alpha_0(\alpha_0+1)}{2} + \rho_0 + 1 &= \chi\\
						\frac{\alpha_2(\alpha_2+1)}{2} + \rho_2 + 1 &= 3\chi
					\end{aligned}
				\end{equation}

				Assume $\Lambda$ is elementary and $\alpha_0$ and $\alpha_2$ have different parities. The equations
				above together with equation \ref{eq:top_different_parities} give
				\[\frac{(\alpha_2)^2 - (\alpha_0+1)^2 + 2}{2} + 2\rho_2 = 2\chi\]
				which is impossible as $\alpha_0$ and $\alpha_2$ having different parities implies the left hand side is odd.
				We deduce $\alpha_0$ and $\alpha_2$ have the same parity and that equation \ref{eq:top_same_parity} applies.

				Assume now that $\alpha_2 - 2\rho_2 = \alpha_0 - 2\rho_0$. Equations \ref{eq:lambda_elementary} then gives
				\[\frac{(\alpha_2+1)^2 - (\alpha_0+1)^2}{2} = 2\chi\]
				which is only possible if $\alpha_0$ and $\alpha_2$ have the same parity, and we conclude with equation \ref{eq:top_same_parity}.
			\end{proof}

			\noindent
			We refer to figures \ref{fig:01_138} and \ref{fig:01_297} for illustrations.

			The second group contains periods $\Lambda^k\Lambda_{0\ast}$ for any $\ast\in \{-, +\}$ and any integer $k\ge 0$.
			The case $k=0$ involves checking when $\Lambda_{0\ast}$ is elementary while
			the cases $k > 0$ require checking when $\Lambda$ and $\Lambda_{0\ast}$ are elementary together.
			\begin{Proposition}\label{prop:lambda0_elementary}
				For any $\ast\in \{-, +\}$, block $\Lambda_{0\ast} = (\chi', \chi, \chi) = ((\alpha_i', \rho_i'))_{0\le i\le 2}$ is elementary
				if and only if $\alpha_2' - 2\rho_2' = \alpha_0' - 2\rho_0'$.
			\end{Proposition}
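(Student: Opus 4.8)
The plan is to mirror the proof of Proposition \ref{prop:lambda_elementary} almost line for line, the only change being the two constants that appear in the governing equations. First I would record, from the correspondence between a ternary block and its slicing triplet, that $\Lambda_{0\ast} = (\chi\ast 1, \chi, \chi) = ((\alpha_i', \rho_i'))_{0\le i\le 2}$ satisfies
\[
	\frac{\alpha_0'(\alpha_0'+1)}{2} + \rho_0' + 1 = \chi\ast 1 \qquad\text{and}\qquad \frac{\alpha_2'(\alpha_2'+1)}{2} + \rho_2' + 1 = 3\chi\ast 1.
\]
Before invoking the parity machinery I would note that $\Lambda_{0\ast}$ is indeed a ternary block, so that the word \emph{elementary} is meaningful for it; this is exactly the content of Property \ref{property:all_ternary}, which exhibits $\Gamma_{J\ast}$ as a primary slope.

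The single observation that carries the whole argument is that the $\ast 1$ correction cancels under subtraction, since $(3\chi\ast 1) - (\chi\ast 1) = 2\chi$. Subtracting the two displayed relations therefore yields
\[
	\frac{\alpha_2'(\alpha_2'+1) - \alpha_0'(\alpha_0'+1)}{2} + \rho_2' - \rho_0' = 2\chi,
\]
which is verbatim the subtracted relation used for $\Lambda$ in Proposition \ref{prop:lambda_elementary}, now decorated with primes. From this point the two implications run exactly as there. For the forward direction, assuming $\Lambda_{0\ast}$ elementary with $\alpha_0'$ and $\alpha_2'$ of different parities, I would combine the last display with equation \ref{eq:top_different_parities} to reach an identity of the form $\tfrac{(\alpha_2')^2 - (\alpha_0'+1)^2 + 2}{2} + 2\rho_2' = 2\chi$; since $\alpha_2'$ and $\alpha_0'+1$ then share a parity, the left-hand side is odd whereas the right-hand side is even, a contradiction. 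Hence $\alpha_0'$ and $\alpha_2'$ share a parity and equation \ref{eq:top_same_parity} gives $\alpha_2' - 2\rho_2' = \alpha_0' - 2\rho_0'$.

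For the converse, assuming $\alpha_2' - 2\rho_2' = \alpha_0' - 2\rho_0'$, I would substitute into the subtracted relation to obtain $\tfrac{(\alpha_2'+1)^2 - (\alpha_0'+1)^2}{2} = 2\chi$; this is an integer only when $\alpha_2'$ and $\alpha_0'$ share a parity, so equation \ref{eq:top_same_parity} is precisely the condition at hand and $\Lambda_{0\ast}$ is elementary. I expect no genuine obstacle: the content is entirely parity bookkeeping inherited from Proposition \ref{prop:lambda_elementary}. The one thing that must be gotten right — and the reason the clean characterisation survives unchanged for $\Lambda_{0\ast}$ — is the cancellation $(3\chi\ast 1) - (\chi\ast 1) = 2\chi$, which makes the two blocks behave identically for the purpose of this computation.
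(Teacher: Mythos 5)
Your proof is correct and takes essentially the same approach as the paper: the paper's own proof merely records the two defining equations with the $\ast 1$ terms and states that the reasoning of Proposition \ref{prop:lambda_elementary} applies, which is precisely the cancellation $(3\chi\ast 1) - (\chi\ast 1) = 2\chi$ you isolate. You have simply written out in full the parity computations the paper delegates to the earlier proposition, and they check out.
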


			\begin{proof}
				Observe that we have, by definition,
				\begin{equation}\label{eq:lambda0_elementary}
					\begin{aligned}
						\frac{\alpha_0'(\alpha_0'+1)}{2} + \rho_0' + 1 &= \chi\ast 1\\
						\frac{\alpha_2'(\alpha_2'+1)}{2} + \rho_2' + 1 &= 3\chi\ast 1
					\end{aligned}
				\end{equation}
				The same reasoning as proposition \ref{prop:lambda_elementary} applies.
			\end{proof}

			\begin{Proposition}\label{prop:both_elementary}
				For any $\ast\in \{-, +\}$, blocks $\Lambda = ((\alpha_i, \rho_i))_{0\le i\le 2}$ and $\Lambda_{0\ast} = ((\alpha_i', \rho_i'))_{0\le i\le 2}$
				are elementary together if and only
				\begin{center}
					\begin{tabular}{ccc}
						$\alpha_2 - 2\rho_2 = \alpha_0 - 2\rho_0$ & and & $d_\ast(\alpha_0, \rho_0) = (\alpha_0, \rho_0\ast 1)$
					\end{tabular}
				\end{center}
			\end{Proposition}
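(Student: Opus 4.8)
The plan is to exploit the fact that $\Lambda$ and $\Lambda_{0\ast}$ share the same leading slice $\chi$, so that the tuples $((\alpha_i, \rho_i))_{0\le i\le 2}$ and $((\alpha_i', \rho_i'))_{0\le i\le 2}$ are rigidly linked through the operators $d_\ast$. First I would record, directly from the slice equalities $|\alpha_0, \rho_0| = \chi$, $|\alpha_2, \rho_2| = 3\chi$ and $|\alpha_0', \rho_0'| = \chi\ast 1$, $|\alpha_2', \rho_2'| = 3\chi\ast 1$, that $|\cdot|$ is the monotone enumeration whose successor and predecessor are exactly $d_+$ and $d_-$ (a one-line check of the two boundary clauses in the definitions of $d_\ast$). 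This yields the two identities $(\alpha_0', \rho_0') = d_\ast(\alpha_0, \rho_0)$ and $(\alpha_2', \rho_2') = d_\ast(\alpha_2, \rho_2)$, reducing everything to understanding how $d_\ast$ acts on the quantity $\alpha - 2\rho$ and on the parity of $\alpha$.

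For the backward implication, I assume $\alpha_2 - 2\rho_2 = \alpha_0 - 2\rho_0$ together with $d_\ast(\alpha_0, \rho_0) = (\alpha_0, \rho_0\ast 1)$. Proposition \ref{prop:lambda_elementary} immediately gives that $\Lambda$ is elementary, and in particular that $\alpha_0$ and $\alpha_2$ have the same parity. The crucial point is that proposition \ref{prop:elementary_rho2} forbids $\rho_2\in\{0, \alpha_2\}$, so the boundary clauses of $d_\ast$ never fire at index $2$ and we always have $(\alpha_2', \rho_2') = (\alpha_2, \rho_2\ast 1)$, whence $\alpha_2' = \alpha_2$. Combined with the hypothesis $(\alpha_0', \rho_0') = (\alpha_0, \rho_0\ast 1)$, both $\alpha_0'$ and $\alpha_2'$ retain the parities of $\alpha_0$ and $\alpha_2$ and are therefore equal, so clause \ref{eq:top_same_parity} applies. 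Since each expression $\alpha_i' - 2\rho_i' = \alpha_i - 2(\rho_i\ast 1)$ equals $\alpha_i - 2\rho_i$ shifted by the same amount for $i=0$ and $i=2$, the required equality $\alpha_2' - 2\rho_2' = \alpha_0' - 2\rho_0'$ is equivalent to $\alpha_2 - 2\rho_2 = \alpha_0 - 2\rho_0$, which holds; thus $\Lambda_{0\ast}$ is elementary.

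For the forward implication, assume both blocks are elementary. Then $\Lambda$ elementary gives $\alpha_2 - 2\rho_2 = \alpha_0 - 2\rho_0$ by proposition \ref{prop:lambda_elementary}, the first required condition, and proposition \ref{prop:elementary_rho2} again pins down $(\alpha_2', \rho_2') = (\alpha_2, \rho_2\ast 1)$ with $\alpha_2' = \alpha_2$. It remains to exclude the boundary case of $d_\ast$ at index $0$. Arguing by contradiction, suppose $d_\ast(\alpha_0, \rho_0)\ne(\alpha_0, \rho_0\ast 1)$, so that $(\alpha_0', \rho_0') = (\alpha_0+1, 0)$ when $\ast=+$ (forcing $\rho_0 = \alpha_0$) or $(\alpha_0', \rho_0') = (\alpha_0-1, \alpha_0-1)$ when $\ast=-$ (forcing $\rho_0 = 0$). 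In either case $\alpha_0'$ flips parity, so $\alpha_0'$ and $\alpha_2'$ have different parities and clause \ref{eq:top_different_parities} must hold for $\Lambda_{0\ast}$; substituting the explicit values and using $\alpha_2 - 2\rho_2 = \alpha_0 - 2\rho_0$ evaluated at $\rho_0\in\{0, \alpha_0\}$ produces a contradiction of the form $\pm 2 = 0$. Hence the boundary case is impossible and $d_\ast(\alpha_0, \rho_0) = (\alpha_0, \rho_0\ast 1)$, which completes the equivalence.

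The only genuinely delicate point is the parity bookkeeping under the two boundary branches of $d_\ast$ and matching each branch to the correct clause (\ref{eq:top_same_parity} versus \ref{eq:top_different_parities}) of the elementary condition. The appeal to proposition \ref{prop:elementary_rho2} to kill the boundary behaviour at index $2$ is exactly what makes the index-$2$ coordinate behave uniformly, keeping the entire case analysis confined to index $0$.
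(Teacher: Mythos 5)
Your proof is correct and takes essentially the same route as the paper: both directions hinge on proposition \ref{prop:elementary_rho2} pinning down $(\alpha_2', \rho_2') = (\alpha_2, \rho_2\ast 1)$ and on parity bookkeeping at index $0$, with the backward direction being the same shifted-equation equivalence combined with propositions \ref{prop:lambda_elementary} and \ref{prop:lambda0_elementary}. The only cosmetic difference is in the forward direction, where the paper reads $\alpha_0'\equiv\alpha_2' \pmod 2$ directly off the equation $\alpha_2' - 2\rho_2' = \alpha_0' - 2\rho_0'$ to conclude $\alpha_0' = \alpha_0$ at once, whereas you reach the same parity obstruction by substituting the boundary values of $d_\ast$ into equation \ref{eq:top_different_parities} and deriving the contradiction $\pm 2 = 0$.
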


			\begin{proof}
				If $\Lambda$ and $\Lambda_{0\ast}$ are elementary together, then from propositions \ref{prop:lambda_elementary}
				and \ref{prop:lambda0_elementary}, $\alpha_0$ and $\alpha_2$ have the same parity and
				$\alpha_0'$ and $\alpha_2'$ have the same parity. From proposition \ref{prop:elementary_rho2}, $\alpha_2' = \alpha_2$,
				and we deduce that $\alpha_0$ and $\alpha_0'$ have the same parity and that $\alpha_0 = \alpha_0'$, which
				is equivalent to $d_\ast(\alpha_0, \rho_0) = (\alpha_0, \rho_0\ast 1)$.

				For the other implication, simply observe that
				\begin{align*}
					&& \alpha_2 - 2\rho_2 &= \alpha_0 - 2\rho_0\\
					&\Leftrightarrow& \alpha_2 - 2(\rho_2\ast 1) &= \alpha_0 - 2(\rho_0\ast 1)\\
					&\Leftrightarrow& \alpha_2' - 2\rho_2' &= \alpha_0' - 2\rho_0'
				\end{align*}
				and conclude with propositions \ref{prop:lambda_elementary} and \ref{prop:lambda0_elementary}.
			\end{proof}

			\noindent
			A quick check will show that, for any $J\in \mathds{N}^*$,
			\begin{align*}
				\frac{3J}{3J\chi\ast 1} (3\chi) &= 3 - \frac{\ast 3}{3J\chi\ast 1} & \frac{3J}{3J\chi\ast 1} (3\chi\ast 1) &= 3 + \frac{\ast(3J - 3)}{3J\chi\ast 1}
			\end{align*}
			and with property \ref{property:all_ternary}, we deduce that, stackability allowing, $\Gamma_{J\ast}$
			would have elementary sequence $(\Lambda^{J-1}\Lambda_{0\ast})^\infty$ with initial ordinates
			\begin{align*}
				\frac{2}{3J\chi - 1} &< T_y(0) < \frac{3}{3J\chi - 1} & \frac{3J - 3}{3J\chi + 1} &< T_y(0) < \frac{3J - 2}{3J\chi + 1}
			\end{align*}
			when $\ast$ is $-$ and $+$ respectively.

			Figure \ref{fig:06_827} is given as illustration. The ball comes out of the green part moving northwest,
			meaning one should compare the green part with the reflection of the blue part along a vertical axis.
			\begin{figure}[h!]
				\begin{center}
					\scalebox{0.6}{\includegraphics{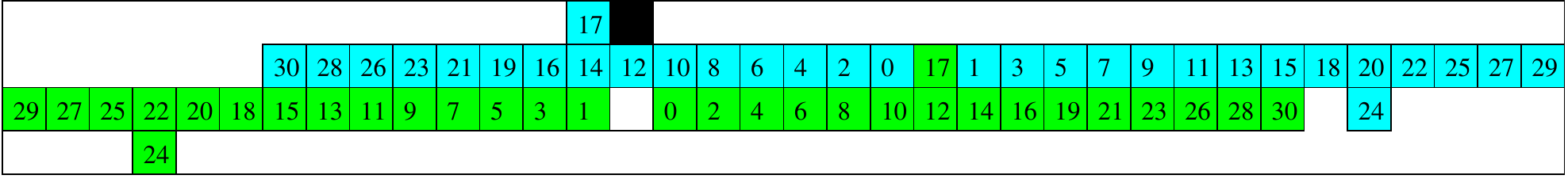}}
				\end{center}
				\vspace{-4mm}
				\caption{One relative period for slope $\frac{3\times 2}{3\times 2\times 138 - 1}$.}
				\label{fig:06_827}
			\end{figure}

			The third and last group of possible elementary sequence's periods contains $\Lambda_{1\ast}$ and $\Lambda_{2\ast}$.
			\begin{Proposition}\label{prop:lambda1_lambda2_elementary}
				For any $\ast\in \{-, +\}$, block $\Lambda_{1\ast}$ is elementary if and only if $\Lambda_{2\ast}$ is elementary. Additionally,
				with $\Lambda = ((\alpha_i, \rho_i))_{0\le i\le 2}$, blocks $\Lambda_{1\ast}$ and $\Lambda_{2\ast}$ are elementary
				if and only if $\alpha_2 - 2(\rho_2\ast 1) = -\alpha_0 + 2\rho_0 + 1$.
			\end{Proposition}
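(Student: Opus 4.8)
The plan is to push everything down to the index-$0$ and index-$2$ coordinates of the two blocks, since conditions \ref{eq:top_same_parity} and \ref{eq:top_different_parities} involve only those. First I would read off the $(\alpha,\rho)$-coordinates of each block from its slicing triplet. For $\Lambda_{1\ast} = ((\alpha_i', \rho_i'))_{0\le i\le 2}$ this gives $|\alpha_0', \rho_0'| = \chi$, $|\alpha_1', \rho_1'| = 2\chi\ast 1$ and $|\alpha_2', \rho_2'| = 3\chi\ast 1$, while $\Lambda_{2\ast}$ yields $|\cdot| = \chi, 2\chi, 3\chi\ast 1$ at indices $0,1,2$. Since $n\mapsto(\alpha,\rho)$ is a bijection and $d_\ast(\alpha,\rho)$ is by construction the pair with $|\cdot| = |\alpha,\rho|\ast 1$, both blocks share the index-$0$ coordinate $(\alpha_0, \rho_0)$ and the index-$2$ coordinate $d_\ast(\alpha_2, \rho_2)$, differing only at index $1$. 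As the elementary conditions ignore index $1$, this already proves the first assertion: $\Lambda_{1\ast}$ is elementary if and only if $\Lambda_{2\ast}$ is.

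For the arithmetic criterion I would mirror proposition \ref{prop:lambda_elementary}, the one substantive change being a parity flip. Write $(\alpha_2', \rho_2') = d_\ast(\alpha_2, \rho_2)$ and assume first the generic case $\alpha_2' = \alpha_2$, $\rho_2' = \rho_2\ast 1$. The slicing relations of the block read
\[
	\frac{\alpha_0'(\alpha_0'+1)}{2} + \rho_0' + 1 = \chi, \qquad \frac{\alpha_2'(\alpha_2'+1)}{2} + \rho_2' + 1 = 3\chi\ast 1 .
\]
Subtracting them and substituting the same-parity relation $\alpha_2' - 2\rho_2' = \alpha_0' - 2\rho_0'$ collapses to
\[
	\frac{(\alpha_2'+1)^2 - (\alpha_0'+1)^2}{2} = 2\chi\ast 1 .
\]
Here is the crux: the right-hand side is odd, whereas the left-hand side is even whenever $\alpha_0'$ and $\alpha_2'$ share parity (the two squares then differ by a multiple of $4$). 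Hence an elementary $\Lambda_{1\ast}$ cannot have $\alpha_0'$ and $\alpha_2'$ of equal parity, so \ref{eq:top_different_parities} must be the governing equation, yielding $\alpha_2 - 2(\rho_2\ast 1) = -\alpha_0 + 2\rho_0 + 1$. Conversely this relation forces $\alpha_0' + \alpha_2'$ to be odd, i.e. different parities, so \ref{eq:top_different_parities} is exactly the applicable condition and the block is elementary by definition. This is the mirror image of proposition \ref{prop:lambda_elementary}, where the subtraction produced the even number $2\chi$ and therefore forced equal parities instead.

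The step demanding the most care is the bookkeeping of the two boundary cases of $d_\ast$, namely $\ast = -$ with $\rho_2 = 0$ and $\ast = +$ with $\rho_2 = \alpha_2$, where $\alpha_2'\ne\alpha_2$. For the forward direction proposition \ref{prop:elementary_rho2} disposes of them, since both give $\rho_2'\in\{0, \alpha_2'\}$, so an elementary block cannot sit there. For the converse I would check that the stated condition is itself incompatible with these cases: using $0\le \rho_0\le \alpha_0\le \alpha_2$ (which follows from $\chi < 3\chi$ and the monotonicity of $|\cdot|$ in $\alpha$), substituting $\rho_2\ast 1 = -1$ or $\rho_2\ast 1 = \alpha_2 + 1$ into the condition forces $\alpha_0 + \alpha_2 = 2\rho_0 - 1$ or $\alpha_0 - 2\rho_0 = \alpha_2 + 3$, both impossible. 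Thus in the boundary cases both sides of the equivalence are false, while in the generic case the parity argument above settles the biconditional. I expect this boundary analysis, rather than the short parity computation, to be the only genuine obstacle.
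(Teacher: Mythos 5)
Your proof is correct and takes essentially the same route as the paper's: the first claim follows from $\Lambda_{1\ast}$ and $\Lambda_{2\ast}$ sharing the index-$0$ coordinate $(\alpha_0,\rho_0)$ and index-$2$ coordinate $d_\ast(\alpha_2,\rho_2)$, and the criterion is obtained by the same parity argument combining the slicing relations with equations \ref{eq:top_same_parity} and \ref{eq:top_different_parities}, with proposition \ref{prop:elementary_rho2} disposing of the boundary cases of $d_\ast$. Your explicit verification that the stated condition is itself incompatible with the boundary cases (so the converse safely lands in the generic case $d_\ast(\alpha_2,\rho_2)=(\alpha_2,\rho_2\ast 1)$) is a detail the paper leaves implicit, but it does not alter the approach.
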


			\begin{proof}
				Observe that $(\alpha_0, \rho_0)$ and $d_\ast(\alpha_2, \rho_2)$ are the $0$th and $2$th coordinates
				of both $\Lambda_{1\ast}$ and $\Lambda_{2\ast}$. This takes care of the first claim.

				Assume $\Lambda_{2\ast}$ is elementary and that $\alpha_0$ and $\alpha_2$ have the same parity. Recall that
				$d_\ast(\alpha_2, \rho_2) = (\alpha_2, \rho_2\ast 1)$ from proposition \ref{prop:elementary_rho2}. Applying
				equation \ref{eq:top_same_parity} to $\Lambda_{2\ast}$ gives
				\[\alpha_2 - 2(\rho_2\ast 1) = \alpha_0 - 2\rho_0\]
				Using equations \ref{eq:lambda_elementary}, we obtain
				\[\frac{(\alpha_2+1)^2 - (\alpha_0+1)^2}{2} - (\ast 1) = 2\chi\]
				which is impossible as $\alpha_0$ and $\alpha_2$ having the same parity implies the left hand side is odd.
				We conclude that if $\Lambda_{2\ast}$ is elementary, then $\alpha_0$ and $\alpha_2$ must have different parities
				and that, again with proposition \ref{prop:elementary_rho2}, equation \ref{eq:top_different_parities} applies to $\Lambda_{2\ast}$.

				Assume now that $\alpha_2 - 2(\rho_2\ast 1) = -\alpha_0 + 2\rho_0 + 1$. Equations \ref{eq:lambda_elementary} then gives
				\[\frac{(\alpha_2)^2 - (\alpha_0+1)^2 + 2}{2} + 2\rho_2\ast 1 = 2\chi\]
				which is only possible if $\alpha_0$ and $\alpha_2$ have different parities, and we conclude with equation \ref{eq:top_different_parities}.
			\end{proof}

			We refer to figure \ref{fig:03_34} for an illustration.

			An interesting fact is that $\Lambda_{0\ast}, \Lambda_{1\ast}$ and $\Lambda_{2\ast}$ can be elementary together.
			Assuming they can also be self-stackable together, a single slope could then have three distinct elementary sequences
			$\Lambda_{0\ast}^\infty, \Lambda_{1\ast}^\infty$ and $\Lambda_{2\ast}^\infty$ (for different initial ordinates).
			The smallest, in terms of the leading slice, slope we could find is
			\begin{center}
				\begin{tabular}{cc}
					$\displaystyle{\frac{3}{3\times 232 - 1}}$ & with $\Lambda = ((21, 0), (29, 28), (36, 29))$\\
				\end{tabular}
			\end{center}
			for the $-$ case and
			\begin{center}
				\begin{tabular}{cc}
					$\displaystyle{\frac{3}{3\times 388046811731629680 + 1}}$ & with\\
				\end{tabular}
				\[\Lambda = ((880961759, 880961759), (1245868069, 163430944), (1525870528, 322454383))\]
			\end{center}
			for the $+$ case. The difference in the leading slices between the $-$ and $+$ cases is striking, but is actually
			a recurring issue (see section \ref{subsec:sufficient_conditions}). This, however, appears to be a common theme
			when dealing with quadratic diophantine equations.

			A quick note on how the $+$ case example was obtained. Contrary to the $-$ case, a program incrementally going through
			the values of the leading slice is, timewise, quite impractical. Setting $\rho_0 = \alpha_0$ along with the condition
			of proposition \ref{prop:lambda1_lambda2_elementary} and the equations of \ref{eq:lambda_elementary}, one can obtain
			the generalised Pell's equation given below:
			\begin{align*}
				(3\alpha_0+5)^2 - 3(\alpha_2+1)^2 = 1
			\end{align*}
			The (apparently unpublished) article \cite{Rob} provides algorithms and references to solve generalised Pell equations.
			In our case, we can refer to either \cite{Mol0} or \cite{Mol1}. From there, a program going exclusively through the solutions of
			this equation yielded a proper example almost instantly.

		\subsection{Stackability}\label{subsec:stackability}
			In the previous section, we have given conditions for the seven (separating the $-$ and $+$ cases) ternary blocks
			of lemma \ref{lemma:necessary_conditions} to be elementary. What is left is to examine the stackability properties
			of the eleven (again separating the $-$ and $+$ cases) pairs of blocks mentionned in lemma \ref{lemma:necessary_conditions}.
			While one could simply go through the eleven pairs of blocks and check the stackability conditions of section \ref{subsec:stacking},
			we give here a more convenient way to check all of them at once, by essentially checking a single one.\\

			With $X$ defined as in section \ref{subsubsec:ternary_blocks}, we define, for any ternary blocks
			$\Delta = ((\alpha_i, \rho_i))_{0\le i\le 2}$ and $\Delta' = ((\alpha_i', \rho_i'))_{0\le i\le 2}$,
			\begin{align}\label{eq:xbar}
				\overline{X}(\Delta, \Delta') &= X(|\Delta|) + (-1)^{\alpha_2+1} X(|\alpha_1', \rho_1'|) - \zeta(\Delta, \Delta')
			\end{align}
			where $\zeta(\Delta, \Delta') = 1$ if $\alpha_2$ and $\alpha_1'$ have the same parity and $0$ otherwise.
			From section \ref{subsec:stacking}, should $\Delta$ be elementary
			and $\Delta'$ stack on top of $\Delta$, then $\overline{X}(\Delta, \Delta')$ is the abscissa
			of the left edge of the bottom brick of $\Delta'$.

			We consider a ternary block $\Lambda = (\chi, \chi, \chi)$, for some $\chi\in \mathds{N}^*$ and define
			$\Lambda_{0\ast}, \Lambda_{1\ast}, \Lambda_{2\ast}$ as in section \ref{subsec:necessary_conditions}.
			Verifying the stackability of any of the eleven pairs of blocks mentioned in lemma \ref{lemma:necessary_conditions}
			requires applying $\overline{X}$ to each pair. Our strategy is to compute $\overline{X}(\Lambda, \Lambda)$ and derive the others.
			The abscissa difference between a given pair and the $(\Lambda, \Lambda)$ stacking is called an \textbf{offset}.
			With $\Delta$ and $\Delta'$ as above, the offset is computed as the sum $(\epsilon_0 + \epsilon_1)$ where
			\begin{center}
				\begin{tabular}{ll}
					$\epsilon_0 = \ast (-1)^{\alpha_2+1}$ & if $\Delta\in \{\Lambda_{0\ast}, \Lambda_{1\ast}, \Lambda_{2\ast}\}$,\\
					$\epsilon_0 = 0$ & otherwise\\\\
					$\epsilon_1 = \ast (-1)^{\alpha_2+\alpha_1}$ & if $\Delta'\in \{\Lambda_{0\ast}, \Lambda_{1\ast}, \Lambda_{2\ast}\}$ and $d_\ast(\alpha_1, \rho_1) = (\alpha_1, \rho_1\ast 1)$,\\
					$\epsilon_1 = 0$ & otherwise
				\end{tabular}
			\end{center}
			Note that when $\Delta'\in \{\Lambda_{0\ast}, \Lambda_{1\ast}, \Lambda_{2\ast}\}$ and $d_\ast(\alpha_1, \rho_1) \ne (\alpha_1, \rho_1\ast 1)$,
			the offsets created by the terms $(-1)^{\alpha_2+1} X(|\alpha_1', \rho_1'|)$ and $\zeta(\Delta, \Delta')$ of equation \ref{eq:xbar}
			cancel each other, leading to $\epsilon_1$ being null.

			We provide the following offset table.
			\vspace{-4mm}
			\begin{center}
				\resizebox{\hsize}{!}{
					\rowcolors{1}{}{gray!20}
					$\begin{array}{|c|c|c|c|c|c|c|c|c|c|}
						\hline
						&&& (\Lambda_{1-}, \Lambda_{1-}) & (\Lambda_{2-}, \Lambda_{2-}) &&&& (\Lambda_{2+}, \Lambda_{2+}) & (\Lambda_{1+}, \Lambda_{1+})\\
						\hline
						\alpha_2 & \alpha_1 & \rho_1 & (\Lambda_{0-}, \Lambda_{0-}) & (\Lambda_{0-}, \Lambda) & (\Lambda, \Lambda_{0-}) & (\Lambda, \Lambda) & (\Lambda, \Lambda_{0+}) & (\Lambda_{0+}, \Lambda) & (\Lambda_{0+}, \Lambda_{0+})\\
						\hline
						\text{even} & \text{even} & 0 < \rho_1 < \alpha_1 & 0 & 1 & -1 & 0 & 1 & -1 & 0\\
						\hline
						\text{-} & \text{-} & \rho_1 = 0 & 1 & 1 & 0 & 0 & 1 & -1 & 0\\
						\hline
						\text{-} & \text{-} & \rho_1 = \alpha_1 & 0 & 1 & -1 & 0 & 0 & -1 & -1\\
						\hline
						\text{even} & \text{odd} & 0 < \rho_1 < \alpha_1 & 2 & 1 & 1 & 0 & -1 & -1 & -2\\
						\hline
						\text{-} & \text{-} & \rho_1 = 0 & 1 & 1 & 0 & 0 & -1 & -1 & -2\\
						\hline
						\text{-} & \text{-} & \rho_1 = \alpha_1 & 2 & 1 & 1 & 0 & 0 & -1 & -1\\
						\hline
						\text{odd} & \text{even} & 0 < \rho_1 < \alpha_1 & 0 & -1 & 1 & 0 & -1 & 1 & 0\\
						\hline
						\text{-} & \text{-} & \rho_1 = 0 & -1 & -1 & 0 & 0 & -1 & 1 & 0\\
						\hline
						\text{-} & \text{-} & \rho_1 = \alpha_1 & 0 & -1 & 1 & 0 & 0 & 1 & 1\\
						\hline
						\text{odd} & \text{odd} & 0 < \rho_1 < \alpha_1 & -2 & -1 & -1 & 0 & 1 & 1 & 2\\
						\hline
						\text{-} & \text{-} & \rho_1 = 0 & -1 & -1 & 0 & 0 & 1 & 1 & 2\\
						\hline
						\text{-} & \text{-} & \rho_1 = \alpha_1 & -2 & -1 & -1 & 0 & 0 & 1 & 1\\
						\hline
					\end{array}$
				}
			\end{center}

			We conclude this section by observing that, from proposition \ref{prop:elementary_rho2}, the bounds given in section \ref{subsec:stacking}
			are always the same for all eleven pairs of blocks.

		\subsection{Sufficient conditions and mutual exclusions}\label{subsec:sufficient_conditions}
			Let $\Lambda = (\chi, \chi, \chi)$ with $\chi\in \mathds{N}^*$ be a ternary block and define
			$\Lambda_{0\ast}, \Lambda_{1\ast}, \Lambda_{2\ast}$ as in section \ref{subsec:necessary_conditions}.
			We say that slope $\frac{1}{\chi}$ satisfies condition
			\begin{itemize}
				\item $(\ceps)$ if $\Lambda$ is elementary and self-stackable,
				\item $(0\ast)$ (resp. $(1\ast), (2\ast)$) if $\Lambda_{0\ast}$ (resp. $\Lambda_{1\ast}, \Lambda_{2\ast}$) is elementary and self-stackable,
				\item $(\ceps0\ast)$ if $\Lambda$ and $\Lambda_{0\ast}$ are elementary, $\Lambda_{0\ast}$ stacks on top of $\Lambda$ and $\Lambda$
					stacks on top of $\Lambda_{0\ast}$.
			\end{itemize}

			With individual cases detailed in sections \ref{subsec:finding_elementary_blocks} and \ref{subsec:stackability},
			we can state the following general theorem.

			\begin{Theorem}\label{theorem:sufficient_conditions}
				Let $\Lambda = (\chi, \chi, \chi)$ with $\chi\in \mathds{N}^*$ be a ternary block and define
				$\Lambda_{0\ast}, \Lambda_{1\ast}, \Lambda_{2\ast}$ as in section \ref{subsec:necessary_conditions}.
				\begin{itemize}
					\item If $(\ceps)$ then slope $\frac{1}{\chi}$ is elementary with elementary sequence $\Lambda^\infty$.
					\item If $(0\ast)$ (resp. $(1\ast), (2\ast)$) then slope $\frac{3}{3\chi\ast 1}$
						is elementary with elementary sequence $\Lambda_{0\ast}^\infty$ (resp. $\Lambda_{1\ast}^\infty$, $\Lambda_{2\ast}^\infty$).
					\item If $(\ceps0\ast)$, then for any integer $J > 1$, slope $\frac{3J}{3J\chi\ast 1}$
						is elementary with elementary sequence $(\Lambda^{J-1}\Lambda_{0\ast})^\infty$.
				\end{itemize}
			\end{Theorem}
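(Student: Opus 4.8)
The plan is to read all three bullets as instances of one verification and to close each with a common induction. For a stated slope $S$ I would first pin down its slicing sequence and the induced decomposition into ternary blocks, then check that (i) every block that occurs is elementary and (ii) every pair of consecutive blocks stacks, and finally promote these finitely many relations to the whole orbit. The promotion works because the block sequence is periodic and the brick configuration is invariant under translation by one period: the defining identity $T_x(\Sigma_N + |\alpha_{i,N},\rho_{i,N}|) = T_x(\Sigma_N) + (-1)^{\alpha_N} X(|\alpha_{i,N},\rho_{i,N}|)$ of section \ref{sec:definitions} has, for a block sequence with fixed $\alpha_2$-parities, its global sign $(-1)^{\alpha_N}$ equal to the accumulated product of the per-step reflection signs $(-1)^{\alpha_2+1}$ carried by the stacking relations of section \ref{subsec:stacking}; hence the identity for all $N$ follows by induction from the identities for a single period. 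Throughout I would choose $T_y(0)\in(0,S)$ generically so the orbit avoids $\mathds{Z}^2$.

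For $S=\frac1\chi$ the slicing sequence is constant equal to $\chi$, so its decomposition is $\Lambda^\infty$; condition $(\ceps)$ says $\Lambda$ is elementary and self-stackable, which is exactly (i) and (ii) for the single pair $(\Lambda,\Lambda)$, and the induction finishes the first bullet. For $S=\frac{3}{3\chi\ast 1}$ I would invoke Property \ref{property:all_ternary}: the slicing sequence is $(\chi,\chi,\chi\ast 1)$ repeated, whose three cyclic phases decompose as $\Lambda_{0\ast}^\infty$, $\Lambda_{1\ast}^\infty$ and $\Lambda_{2\ast}^\infty$, and the property supplies, for each phase, an explicit interval of admissible initial ordinates. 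Selecting the phase realising $\Lambda_{i\ast}$ and applying condition $(i\ast)$ — $\Lambda_{i\ast}$ elementary and self-stackable — gives (i) and (ii) for the constant sequence $\Lambda_{i\ast}^\infty$, settling the second bullet.

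The third bullet is the substantial case. For $J>1$ the computation following Proposition \ref{prop:both_elementary} identifies, for the initial ordinate displayed there, the block decomposition of $\frac{3J}{3J\chi\ast 1}$ as $(\Lambda^{J-1}\Lambda_{0\ast})^\infty$. Condition $(\ceps0\ast)$ makes both $\Lambda$ and $\Lambda_{0\ast}$ elementary, giving (i), and makes the two cross pairs $(\Lambda,\Lambda_{0\ast})$ and $(\Lambda_{0\ast},\Lambda)$ stack. The only consecutive pair left uncovered is $(\Lambda,\Lambda)$, which appears precisely when $J\ge 3$, so the crux is to derive its stackability from $(\ceps0\ast)$. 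Here I would lean on the offset table of section \ref{subsec:stackability} and the observation, recorded at the end of that section, that the forbidden interval $F$ is the same for all eleven pairs. Writing $x_0=\overline{X}(\Lambda,\Lambda)$ as in equation \ref{eq:xbar}, the two cross pairs have $\overline{X}$-values $x_0$ shifted by the tabulated offsets, and $(\Lambda,\Lambda)$ stacks if and only if $x_0\notin F$. Whenever the two offsets bracket $0$ or one of them vanishes, the fact that both cross pairs avoid $F$ forces $x_0\notin F$, using only $|F|=\alpha_2+1\ge 2$.

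The main obstacle is precisely the table rows with $\alpha_1$ odd in which the two cross offsets share a sign: there a cross pair could escape $F$ through one endpoint while $x_0$ sits on that endpoint. I expect to remove these by proving that whenever such a row occurs with $x_0\in F$, the value $x_0$ is at least $1$ away from the endpoint toward which the common offset points, so that the shifted cross values stay inside $F$ and therefore fail to stack, contradicting $(\ceps0\ast)$. Thus $(\ceps0\ast)$ can only be met in the bracket or zero rows, where $(\Lambda,\Lambda)$ stacks as above. The needed estimate comes from evaluating $\overline{X}(\Lambda,\Lambda)=X(|\Lambda|)+(-1)^{\alpha_2+1}X(|\alpha_1,\rho_1|)-\zeta(\Lambda,\Lambda)$ in closed form through the explicit expression of $X(|\alpha,\rho|)$ and substituting the elementarity relation $\alpha_2-2\rho_2=\alpha_0-2\rho_0$ of equation \ref{eq:top_same_parity} together with its parity consequence; this is the one genuinely computational step, but it is finite and case-free once the parities of $\alpha_2$ and $\alpha_1$ are fixed. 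Once $(\Lambda,\Lambda)$ is secured, (ii) holds across one full period and the induction of the first paragraph completes the proof.
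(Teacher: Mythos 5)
Your overall architecture — decompose the orbit via the slicing sequence, verify elementarity of each block and stackability of each consecutive pair, then propagate over the whole orbit by periodicity with the sign bookkeeping $(-1)^{\alpha_N}$ — is faithful to how the paper assembles this theorem (which it states as a summary of sections \ref{subsec:finding_elementary_blocks} and \ref{subsec:stackability}, without a consolidated proof): Property \ref{property:all_ternary} for primariness and ordinate windows, Propositions \ref{prop:lambda_elementary}--\ref{prop:both_elementary} for elementarity, the computation following Proposition \ref{prop:both_elementary} for the form of the sequence, and the offset table for stacking. Your first two bullets are sound, and you correctly isolate the genuine crux of the third bullet: for $J\ge 3$ the sequence $(\Lambda^{J-1}\Lambda_{0\ast})^\infty$ contains $(\Lambda,\Lambda)$ junctions which condition $(\ceps0\ast)$ does not directly control, and your bracketing argument does dispose of the rows where the two cross offsets have opposite signs or one vanishes.

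The gap is your proposed resolution of the same-sign rows, and it is not merely unproven but false: the paper's own conjecture table refutes it. Take $\chi=370$, $\Lambda=((26,18),(37,36),(46,28))$, with condition set $\{\ceps0+,0+\}$ — note $(\ceps)$ is absent. Here $\alpha_2=46$ is even, $\alpha_1=37$ is odd, $0<\rho_1<\alpha_1$, so both cross offsets equal $-1$ (a same-sign row). Computing with equation \ref{eq:xbar}: $X(|\Delta|)=23-29=-6$, $X(|\alpha_1,\rho_1|)=-19+37=18$, $\zeta=0$, hence $\overline{X}(\Lambda,\Lambda)=-6-18=-24$, which sits \emph{exactly on} the left endpoint of the forbidden interval $\left[-\frac{\alpha_2}{2}-1,\frac{\alpha_2}{2}\right]=[-24,23]$; both cross values equal $-25$ and escape, so $(\ceps0+)$ holds while $\Lambda$ fails the self-stacking test — precisely the configuration your lemma asserts cannot occur (the same happens at $\chi=475881$, condition set $\{0-,\ceps0-\}$, where $\overline{X}(\Lambda,\Lambda)=-846$ is the endpoint of $[-846,844]$). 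Your claimed estimate that $x_0$ lies at distance at least $1$ from the endpoint toward which the common offset points is therefore exactly wrong — the realized cases sit at distance $0$ — and your argument, if valid, would prove condition sets the paper exhibits to be impossible. So your proof of the third bullet collapses on the same-sign rows, which do occur under $(\ceps0\ast)$; closing the $(\Lambda,\Lambda)$ junction for $J\ge 3$ in those rows requires an input beyond the offset table (read against its own table, the paper's statement itself appears to presuppose something extra there, e.g.\ self-stackability of $\Lambda$, and since it supplies no detailed proof of this theorem, the burden your proposal takes on at this step is real and unmet).
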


			In section \ref{subsec:finding_elementary_blocks}, we gave examples of slopes such that $(0\ast), (1\ast)$
			and $(2\ast)$ were true together. One may ask what combinations of the $9$ conditions
			(we separate the $-$ and $+$ cases) above are possible.

			\begin{Conjecture}\label{prop:condition_sets}
				The $19$ condition sets given in the table below are the only condition sets that can be satisfied.
			\end{Conjecture}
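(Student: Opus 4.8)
The plan is to turn the statement into an arithmetic question about the single integer $\chi$. By the propositions of section \ref{subsec:finding_elementary_blocks} the \emph{elementary} half of each of the nine conditions is an explicit diophantine predicate in the quantities $(\alpha_0, \rho_0, \alpha_2, \rho_2)$ attached to $\Lambda = (\chi, \chi, \chi)$: condition $(\ceps)$ and each $(\ceps0\ast)$ force $\alpha_2 - 2\rho_2 = \alpha_0 - 2\rho_0$ (propositions \ref{prop:lambda_elementary} and \ref{prop:both_elementary}), each $(0\ast)$ is governed by proposition \ref{prop:lambda0_elementary}, and each $(1\ast)$, $(2\ast)$ forces $\alpha_2 - 2(\rho_2 \ast 1) = -\alpha_0 + 2\rho_0 + 1$ (proposition \ref{prop:lambda1_lambda2_elementary}). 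The \emph{stackability} half of each condition is read off the offset table of section \ref{subsec:stackability} together with the common forbidden interval of section \ref{subsec:stacking}, which assigns to every condition a finite set of admissible offsets. A condition set is then satisfiable exactly when the conjunction of the predicates of its members and the negations of the predicates of its non-members admits a solution $\chi \in \mathds{N}^*$, so the whole statement becomes the solvability of finitely many such mixed systems.

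The first reduction is a parity dichotomy, which I expect to carry most of the exclusion work. Fixing $\chi$ fixes the parities of $\alpha_0$ and $\alpha_2$. The equations of propositions \ref{prop:lambda_elementary} and \ref{prop:both_elementary} force $\alpha_0 \equiv \alpha_2 \pmod 2$, so $(\ceps)$ and every $(\ceps0\ast)$ live in the \emph{same-parity} regime, whereas the equation of proposition \ref{prop:lambda1_lambda2_elementary} forces $\alpha_0 \not\equiv \alpha_2 \pmod 2$, so $(1\ast)$ and $(2\ast)$ live in the \emph{different-parity} regime; these two regimes are mutually exclusive. Proposition \ref{prop:lambda0_elementary} places $(0\ast)$ in the same-parity regime when $d_\ast$ fixes $\alpha_0$ and in the different-parity regime otherwise, the latter forcing (via proposition \ref{prop:elementary_rho2}) the boundary value $\rho_0 = 0$ for $\ast = -$ and $\rho_0 = \alpha_0$ for $\ast = +$. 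Two further eliminations finish the skeleton: the boundary values $\rho_0 = 0$ and $\rho_0 = \alpha_0$ are incompatible for $\chi \ge 2$, and subtracting the $(1-)$ and $(1+)$ (respectively $(2-)$ and $(2+)$) equations yields $4 = 0$, so at most one sign can occur in the different-parity regime. Every satisfiable set therefore lies in $\{(\ceps), (0-), (0+), (\ceps0-), (\ceps0+)\}$ or, for a single fixed $\ast$, in $\{(0\ast), (1\ast), (2\ast)\}$, which already collapses the $2^9$ a priori sets to a short list.

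Next I would finish the enumeration inside each regime by feeding the shared equation into the offset table. In the same-parity regime the single equation $\alpha_2 - 2\rho_2 = \alpha_0 - 2\rho_0$ simultaneously governs the elementariness of $\Lambda$, $\Lambda_{0-}$ and $\Lambda_{0+}$, so the distinctions between $(\ceps)$, $(0\pm)$ and $(\ceps0\pm)$ come \emph{only} from self- and mutual stackability, that is, from which offsets of the table put $\overline{X}$ outside the forbidden interval of section \ref{subsec:stacking}; reading the sign patterns row by row, split according to the parity of $\alpha_2$ and $\alpha_1$ and whether $\rho_1 \in \{0, \alpha_1\}$, should leave exactly the same-parity sets of the table. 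The different-parity regime is treated identically, using that the explicit slopes of section \ref{subsec:finding_elementary_blocks} already realise $(0\ast), (1\ast), (2\ast)$ simultaneously, so that the only remaining freedom is again the stacking offsets.

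The remaining, and genuinely hard, step is \emph{realisability}: to establish that each of the nineteen surviving sets is attained one must exhibit a $\chi$ whose on/off profile is \emph{exactly} that set, and to establish that nothing else survives one must show the complementary mixed systems have \emph{no} solution. Both reduce, after substituting the boundary value of $\rho_0$ into the equations \ref{eq:lambda_elementary}, to generalised Pell equations of the shape $(3\alpha_0 + 5)^2 - 3(\alpha_2 + 1)^2 = 1$ and its sign variants, whose solution sets are sparse and, in the $+$ cases, have enormous fundamental solutions. The obstacle is thus not the algebra but the number theory: controlling a whole Pell orbit finely enough to guarantee that some solution turns exactly the desired conditions on while keeping every other condition off, and that no solution realises a twentieth set. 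It is precisely this exact control over coupled Pell families, for which one would lean on \cite{Rob, Mol0, Mol1}, that I do not expect to push through unconditionally, which is why the statement is recorded as a conjecture rather than a theorem.
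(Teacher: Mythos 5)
You should first note what the paper actually does with this statement: it is labelled a Conjecture and the paper contains no proof of it. The paper offers only the table of nineteen witnesses, plus the unproved remark that all remaining condition sets except $\{1-, 0-\}$, $\{0+, 1+\}$ and $\{\ceps, 0+\}$ ``could be proven to be impossible''. So there is no paper proof to compare against, and your write-up, correctly and honestly, does not claim to close the statement either. Within that constraint, your reduction skeleton is sound and in fact more explicit than anything the paper prints: fixing $\chi$ fixes the parities of $\alpha_0$ and $\alpha_2$; propositions \ref{prop:lambda_elementary} and \ref{prop:both_elementary} force equal parities while proposition \ref{prop:lambda1_lambda2_elementary} forces opposite ones, so the $(\ceps)$-side and the $(1\ast)/(2\ast)$-side conditions are mutually exclusive for a given $\chi$; your placement of $(0\ast)$ in either regime according to whether $d_\ast$ fixes $\alpha_0$ (using proposition \ref{prop:elementary_rho2} to pin $\alpha_2' = \alpha_2$, exactly as in the proof of proposition \ref{prop:both_elementary}) is right; the $4 = 0$ exclusion of mixed signs among $(1\pm), (2\pm)$ and the incompatibility of $\rho_0 = 0$ with $\rho_0 = \alpha_0$ check out; and every row of the paper's table is consistent with the resulting short list of candidate sets.

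Two calibration points. First, you misallocate part of the difficulty: realisability of the nineteen sets is not open, since each row of the table supplies an explicit $\chi$ whose exact on/off profile is a finite computation (recover $\Lambda$ from $\chi$ and $3\chi$, test the diophantine conditions of section \ref{subsec:finding_elementary_blocks} and the offsets of section \ref{subsec:stackability} against the forbidden interval of section \ref{subsec:stacking}). The only genuinely open content of the conjecture is exhaustiveness, and the paper asserts this has been settled for all but the three sets named above --- sets which your skeleton also permits and does not eliminate, so your argument stalls at exactly the same point the paper does. Second, your diagnosis of the obstruction is the right one: excluding those last sets amounts to showing that no solution of the relevant coupled generalised Pell systems (of the shape $(3\alpha_0+5)^2 - 3(\alpha_2+1)^2 = 1$ and variants) switches on exactly that combination while keeping the others off, the kind of fine control over Pell orbits for which \cite{Rob}, \cite{Mol0} and \cite{Mol1} supply tools but no general method. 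In short: read your proposal as a correct reduction of the conjecture to finitely many diophantine satisfiability questions, matching the paper's state of knowledge, not as a proof --- and if you revise it, replace the claim that realisability is hard by the observation that only impossibility, and only for three residual sets, remains open.
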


			\begin{center}
				\resizebox{\hsize}{!}{
					$\begin{array}{|c|c|c|}
						\hline
						\chi & \Lambda & \text{Condition set}\\
						\hline

						\rowcolor{gray!20}
						3 & ((1, 1), (2, 2), (3, 2)) & \{\}\\
						\hline

						11 & ((4, 0), (6, 0), (7, 4)) & \{2+\}\\
						\hline

						\rowcolor{gray!20}
						138 & ((16, 1), (22, 22), (28, 7)) & \{0-, \ceps0-, \ceps, \ceps0+, 0+\}\\
						\hline

						173 & ((18, 1), (25, 20), (31, 22)) & \{1+\}\\
						\hline

						\rowcolor{gray!20}
						190 & ((18, 18), (27, 1), (33, 8)) & \{2-, 1-\}\\
						\hline

						232 & ((21, 0), (29, 28), (36, 29)) & \{2-, 1-, 0-\}\\
						\hline

						\rowcolor{gray!20}
						370 & ((26, 18), (37, 36), (46, 28)) & \{\ceps0+, 0+\}\\
						\hline

						1828 & ((59, 57), (85, 0), (104, 23)) & \{1+, 2+\}\\
						\hline

						\rowcolor{gray!20}
						144992 & ((538, 0), (761, 42), (932, 197)) & \{\ceps, \ceps0+, 0+\}\\
						\hline

						280028 & ((747, 649), (1057, 902), (1295, 923)) & \{\ceps0-, \ceps, \ceps0+, 0+\}\\
						\hline

						\rowcolor{gray!20}
						475881 & ((975, 80), (1379, 251), (1689, 437)) & \{0-, \ceps0-\}\\
						\hline

						524625 & ((1023, 848), (1448, 173), (1773, 1223)) & \{0-, \ceps, 0+\}\\
						\hline

						\rowcolor{gray!20}
						1439478 & ((1696, 421), (2399, 155), (2938, 1042)) & \{0-, \ceps0-, \ceps\}\\
						\hline

						6529510 & ((3613, 818), (5110, 414), (6258, 4118)) & \{1-\}\\
						\hline

						\rowcolor{gray!20}
						163269038 & ((18069, 15622), (25554, 21840), (31298, 9062)) & \{2-\}\\
						\hline

						& ((84148175, 27772619), &\\
						& (119003491, 928753), &\\
						\multirow{-3}{*}{3540457747762020} & (145748915, 58572989)) & \multirow{-3}{*}{$\{0-, \ceps0-, \ceps, \ceps0+\}$}\\
						\hline

						\rowcolor{gray!20}
						& ((419996254, 337792260), &\\
						\rowcolor{gray!20}
						& (593964399, 532915491), &\\
						\rowcolor{gray!20}
						\multirow{-3}{*}{88198427234806646} & (727454852, 491521559)) & \multirow{-3}{*}{$\{0+\}$}\\
						\hline

						388046811731629680 & \makecell{((880961759, 880961759),\\(1245868069, 163430944),\\(1525870528, 322454383))} & \{0+, 1+, 2+\}\\
						\hline

						\rowcolor{gray!20}
						& ((5843883585, 544273409), &\\
						\rowcolor{gray!20}
						& (8264499423, 1448966453), &\\
						\rowcolor{gray!20}
						\multirow{-3}{*}{17075487680982441315} & (10121903283, 2683283258)) & \multirow{-3}{*}{$\{0-\}$}\\
						\hline
					\end{array}$
				}
			\end{center}
			\vspace{2mm}

			All remaining condition sets except for $\{1-, 0-\}$, $\{0+, 1+\}$ and $\{\ceps, 0+\}$
			could be proven to be impossible.

			In addition to being an example for the empty condition set, making it non-elementary itself, we note
			that slope $\frac{1}{3}$ also serve as an example of a slope that is primary for an elementary (not just ternary)
			block, $((1, 1), (2, 2), (3, 2))$, which is not self-stackable. Furthermore, it can be shown to produce relatively periodic orbits. In \cite{BF},
			relatively periodic orbits are obtained for $\frac{1}{3}$ for various initial configurations (the initial set of obstacles),
			none of which are specifically our setup (a single unit square without a brick).
			Figure \ref{fig:01_03} fills this gap. It depicts the preperiod necessary to reach relative periodicity and a single relative period
			divided into two centrally inverted (up to some block translations) parts.
			Initial ordinate can be any real number in $(0, \frac{1}{3})$.
			\begin{figure}[h!]
				\begin{center}
					\scalebox{0.6}{\includegraphics{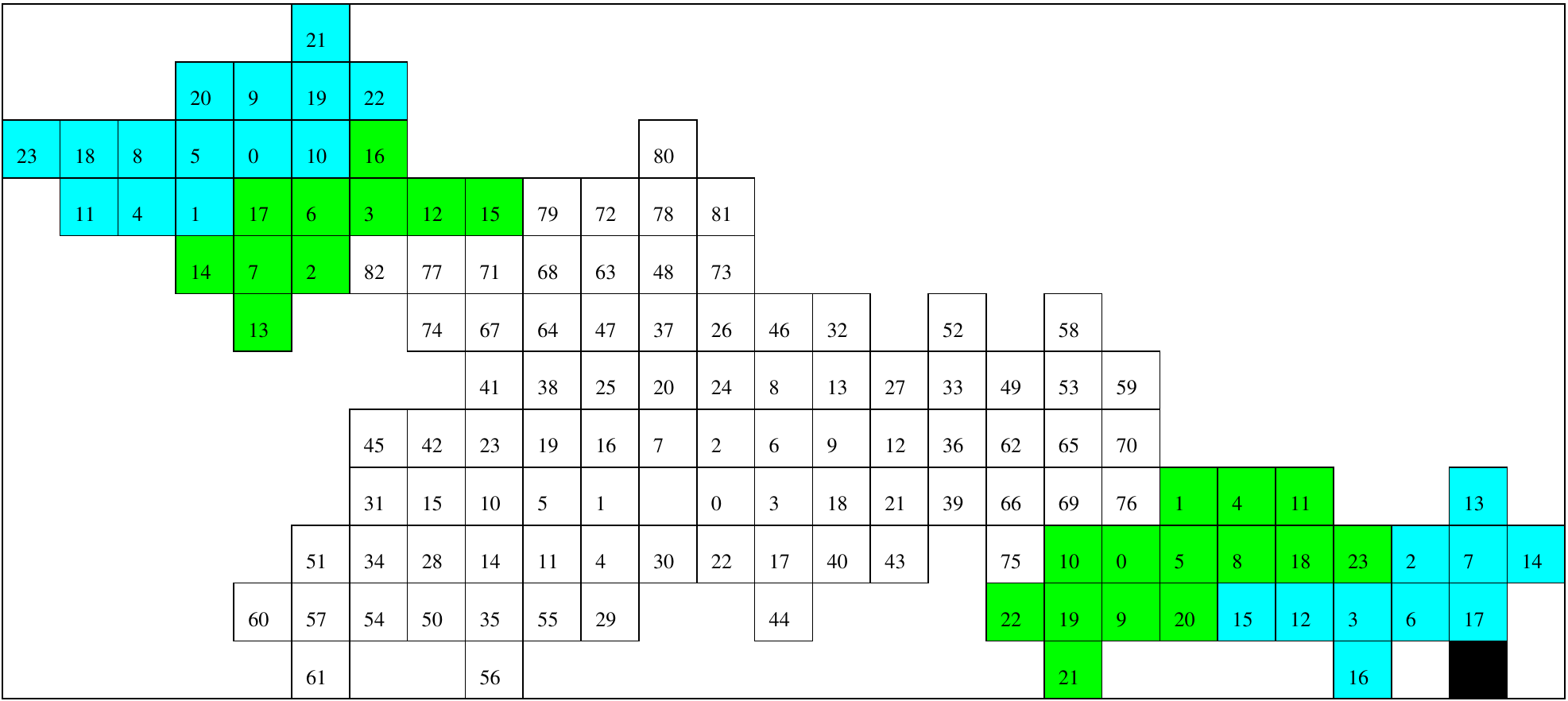}}
				\end{center}
				\vspace{-2mm}
				\caption{One relative period of $\frac{1}{3}$.}
				\label{fig:01_03}
			\end{figure}

			As a reminder, while elementary slopes have relatively periodic orbits, we have made no assumption on whether
			non-elementary slopes can produce relatively periodic orbits.

	\section{Symbolic orbits}\label{sec:symbolic_orbits}
		We define the symbolic orbit of a breakout orbit as the (right-)infinite word $\omega = (\omega_n)_{n\in \mathds{N}}$ of $\{a, b\}^\mathds{N}$
		such that $\omega_n = a$ if the $n$th brick was broken by hitting a horizontal edge and $\omega_n = b$ otherwise.\\

		It would have been a reasonable question to ask whether symbolic orbits could be used as substitutes
		for the geometrical ones, as is the case in billiards. We can now assert that this not the case.

		\begin{Property}\label{property:symbolic_orbit}
			If $\Lambda = (\chi, \chi, \chi) = ((\alpha_i, \rho_i))_{0\le i\le 2}$ and $\Lambda_{0\ast}$, for some $\ast\in \{-, +\}$,
			are both elementary blocks, then the (finite) symbolic word of a $\Lambda$-orbit is the same as the symbolic word of a $\Lambda_{0\ast}$-orbit
			if and only if we have $d_\ast(\alpha_i, \rho_i) = (\alpha_i, \rho_i\ast 1)$ for any $0\le i\le 2$.
		\end{Property}

		\begin{Proposition}
			A single symbolic orbit can be shared by an infinity of slopes.
		\end{Proposition}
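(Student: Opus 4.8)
The plan is to exhibit an explicit infinite family of slopes all producing the same symbolic orbit, using the machinery developed for elementary blocks. The key observation is that Property \ref{property:symbolic_orbit} gives a clean combinatorial criterion for when a $\Lambda$-orbit and a $\Lambda_{0\ast}$-orbit share a symbolic word: namely $d_\ast(\alpha_i, \rho_i) = (\alpha_i, \rho_i \ast 1)$ for all $0 \le i \le 2$, which simply means none of the $\rho_i$ sits at the boundary value ($\rho_i = \alpha_i$ when $\ast = +$, or $\rho_i = 0$ when $\ast = -$) that would force a carry in the $d_\ast$ operation. When no carry occurs, each block differs from $\Lambda$ only by shifting each $\rho_i$ by one unit, and geometrically this produces the identical sequence of horizontal-edge versus vertical-edge hits, hence the identical symbolic word.

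First I would recall from Theorem \ref{theorem:sufficient_conditions} that condition $(\ceps 0\ast)$ yields, for every integer $J > 1$, an elementary slope $\frac{3J}{3J\chi \ast 1}$ with elementary sequence $(\Lambda^{J-1}\Lambda_{0\ast})^\infty$. Since a single fixed $\chi$ satisfying $(\ceps 0\ast)$ produces infinitely many distinct slopes as $J$ ranges over the integers $>1$, I would select such a $\chi$ (for instance $\chi = 138$, which the table following Conjecture \ref{prop:condition_sets} records as satisfying both $\ceps 0-$ and $\ceps 0+$). For this $\chi$, the blocks $\Lambda$ and $\Lambda_{0\ast}$ are both elementary and the hypothesis of Property \ref{property:symbolic_orbit} on the $\rho_i$ can be checked directly against the explicit coordinates listed in the table. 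The whole point is that each slope $\frac{3J}{3J\chi \ast 1}$ has a symbolic orbit built by concatenating $J-1$ copies of the symbolic word of a $\Lambda$-orbit with one copy of the symbolic word of a $\Lambda_{0\ast}$-orbit, repeated forever.

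The core of the argument is then to show these symbolic words coincide as $J$ varies. Because Property \ref{property:symbolic_orbit} guarantees the $\Lambda$-orbit and the $\Lambda_{0\ast}$-orbit have the same finite symbolic word $w$, the symbolic orbit of $\frac{3J}{3J\chi \ast 1}$ is exactly $(w^{J-1} w)^\infty = (w^J)^\infty = w^\infty$, independent of $J$. Thus every slope in the family $\{\frac{3J}{3J\chi \ast 1} : J > 1\}$ shares the single periodic symbolic orbit $w^\infty$, and since distinct values of $J$ give distinct reduced fractions, this is an infinite family of distinct slopes with one common symbolic orbit.

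The main obstacle will be verifying that the chosen $\chi$ genuinely satisfies the no-carry condition of Property \ref{property:symbolic_orbit} simultaneously for all three indices and for the relevant sign $\ast$, and confirming that the symbolic word of a single relative period is literally invariant under the $\rho_i \mapsto \rho_i \ast 1$ shift rather than merely equal in length. I would handle this by unwinding the definition of the symbolic orbit in terms of which horizontal edges are crossed: the letters $a$ (horizontal-edge hit) occur at the three traveling distances $|\alpha_i, \rho_i|$, and shifting each $\rho_i$ by one unit without a carry shifts each of these three distances by exactly one unit in the same direction, so the \emph{pattern} of $a$'s and $b$'s (which depends only on the gaps $\chi_0, \chi_1, \chi_2$ between consecutive horizontal-edge hits) is preserved. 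Once this pattern-invariance is established, the concatenation argument is immediate and the proposition follows.
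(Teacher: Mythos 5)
Your proposal is correct and takes essentially the same route as the paper's proof: fix $\chi = 138$ with $\Lambda = ((16,1),(22,22),(28,7))$, invoke Property \ref{property:symbolic_orbit} to identify the symbolic words of the $\Lambda$- and $\Lambda_{0\ast}$-orbits, and apply Theorem \ref{theorem:sufficient_conditions} under condition $(\ceps0\ast)$ to obtain the infinite family of slopes $\frac{3J}{3J\chi\ast 1}$, $J > 1$, all with symbolic orbit $w^\infty$. The one detail you leave open must be settled in favor of $\ast = -$: since $\rho_1 = \alpha_1 = 22$, the no-carry condition of Property \ref{property:symbolic_orbit} fails for $\ast = +$ (the paper exploits precisely this failure to prove the next proposition), so despite $\chi = 138$ satisfying both $(\ceps0-)$ and $(\ceps0+)$, your argument goes through only with the $-$ sign.
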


		\begin{proof}
			Set $\chi = 138$ and $\Lambda = ((16, 1), (22, 22), (28, 7))$. From property \ref{property:symbolic_orbit},
			$\Lambda$-orbits and $\Lambda_{0-}$-orbits share the same symbolic word.
			Observe that $\frac{1}{\chi}$ satisfies condition $(\ceps0-)$, and apply theorem \ref{theorem:sufficient_conditions}.
		\end{proof}

		It should be noted however, that we can still construct an infinity of (pairwise) distinct symbolic orbits.

		\begin{Proposition}
			There exists an infinity of periodic symbolic orbits.
		\end{Proposition}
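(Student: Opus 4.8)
The goal is to exhibit infinitely many pairwise distinct periodic symbolic orbits, and the natural strategy is to produce a sequence of elementary slopes whose symbolic periods have strictly increasing lengths, so that no two of them can coincide. The plan is to leverage theorem \ref{theorem:sufficient_conditions} as a machine for manufacturing periodic symbolic orbits: whenever a slope $\frac{1}{\chi}$ satisfies condition $(\ceps)$, the slope is elementary with elementary sequence $\Lambda^\infty$, and from proposition \ref{prop:periodic_elementary_sequence} the corresponding orbit is relatively periodic, which forces the symbolic orbit to be periodic. So it suffices to find infinitely many distinct values of $\chi$ for which $(\ceps)$ holds and for which the resulting symbolic periods are pairwise distinct.

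First I would recall that the symbolic word of one relative period of a $\Lambda$-orbit, with $\Lambda = (\chi, \chi, \chi)$, has a length governed by $|\Delta| = |\alpha_2, \rho_2| = 3\chi$, since the number of letters $b$ counted over the three slices is exactly $\chi + \chi + \chi$ together with the three letters $a$ coming from the three horizontal edges. Concretely, as $\chi$ grows the period length grows without bound, so the symbolic periods obtained from two different admissible values $\chi < \chi'$ cannot be equal (one is strictly longer as a primitive period). Thus distinctness is automatic once I secure an infinite supply of admissible $\chi$. The only genuine content is therefore to prove that condition $(\ceps)$ is satisfied for infinitely many $\chi$.

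To obtain infinitely many $\chi$ with $(\ceps)$, I would exhibit an explicit infinite family. Condition $(\ceps)$ asks that $\Lambda = (\chi, \chi, \chi)$ be elementary and self-stackable. Elementarity of $\Lambda$ is, by proposition \ref{prop:lambda_elementary}, the single arithmetic constraint $\alpha_2 - 2\rho_2 = \alpha_0 - 2\rho_0$ where $(\alpha_i, \rho_i)$ are determined from $\chi$ and $3\chi$ via $|\alpha_0,\rho_0| = \chi$ and $|\alpha_2,\rho_2| = 3\chi$; self-stackability is then the condition read off from the offset table and the bounds of section \ref{subsec:stacking} for the pair $(\Lambda, \Lambda)$, whose offset entry is uniformly $0$. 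I would note that $\chi = 138$ already gives a worked example satisfying $(\ceps)$ (it appears in the conjecture's table), and the real task is to show that the self-stackability inequality together with the elementarity equation has infinitely many integer solutions, e.g. by producing a one-parameter subfamily. The cleanest route is to rewrite the elementarity relation as a quadratic Diophantine (Pell-type) equation in $(\alpha_0, \alpha_2)$ exactly as was done in section \ref{subsec:finding_elementary_blocks} for the $+$ case, observe that a Pell equation with one solution has infinitely many, and then check that the self-stackability constraint survives along an arithmetic progression of those solutions.

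The main obstacle I anticipate is precisely this last verification: while elementarity reduces to a Pell equation and therefore clearly admits infinitely many solutions, self-stackability is an open (noninclusion) condition depending on the detailed values of $T_x(|\Delta| + |\alpha_1', \rho_1'|)$ relative to a fixed interval of length roughly $\alpha_2$, and one must confirm that it is not violated for all but finitely many members of the Pell family. I would handle this by choosing the family so that the relevant offset for $(\Lambda, \Lambda)$ stays at its tabulated value $0$ and checking that the left edge of the bottom brick lands strictly outside the forbidden interval for large $\chi$; since the forbidden interval and the landing abscissa both scale linearly in $\alpha_2$ but with distinct leading coefficients determined by the parities and by $X$, a gap opens for large parameters. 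Once an infinite admissible family is in hand, distinctness of the symbolic orbits follows from the strictly increasing period lengths, completing the proof.
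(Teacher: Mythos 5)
There is a genuine gap, and it sits exactly where you flagged your ``main obstacle'': your argument needs infinitely many values of $\chi$ for which condition $(\ceps)$ holds, but nothing in the paper establishes this, and your sketch does not either. Elementarity of $\Lambda=(\chi,\chi,\chi)$ is indeed a tractable arithmetic condition (proposition \ref{prop:lambda_elementary}), but the Pell equation you invoke from section \ref{subsec:finding_elementary_blocks} was derived for a different situation (the $\Lambda_{1+}/\Lambda_{2+}$ case with the extra normalisation $\rho_0=\alpha_0$), not for $(\ceps)$; and even granting infinitely many elementary $\Lambda$'s, self-stackability is the hard part. Your claim that ``the forbidden interval and the landing abscissa both scale linearly in $\alpha_2$ with distinct leading coefficients'' is not justified: the landing abscissa $\overline{X}(\Lambda,\Lambda)$ is built from the alternating sums $X(|\Delta|)$ and $X(|\alpha_1,\rho_1|)$, whose values depend delicately on $(\alpha_1,\rho_1,\alpha_2,\rho_2)$ and do not separate from the interval $\left[-\frac{\alpha_2}{2}-1,\frac{\alpha_2}{2}\right]$ by any asymptotic argument given here. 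Tellingly, the paper itself only \emph{conjectures} which condition sets occur (conjecture \ref{prop:condition_sets}) and exhibits finitely many verified $\chi$; an infinite verified family of $(\ceps)$-slopes is simply not available, so your proof cannot be closed with the tools at hand. (A secondary, fixable issue: distinctness of infinite periodic words needs minimal periods, not just periods of different lengths, so ``strictly longer period'' alone does not immediately give distinct symbolic orbits.)

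The paper avoids this entire difficulty with one verified base slope rather than infinitely many: take $\chi=138$, $\Lambda=((16,1),(22,22),(28,7))$, for which condition $(\ceps0+)$ is checked. The third bullet of theorem \ref{theorem:sufficient_conditions} then produces, for every integer $J>1$, an elementary slope $\frac{3J}{3J\chi+1}$ with elementary sequence $(\Lambda^{J-1}\Lambda_{0+})^\infty$, hence a periodic symbolic orbit with period the word of $\Lambda$ repeated $J-1$ times followed by the word of $\Lambda_{0+}$. Property \ref{property:symbolic_orbit} guarantees these two finite words differ (here $\rho_1=\alpha_1=22$, so $d_+(\alpha_1,\rho_1)=(23,0)\ne(\alpha_1,\rho_1+1)$), so distinct values of $J$ yield pairwise distinct periodic symbolic orbits. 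If you want to salvage your write-up, replace the search for infinitely many $(\ceps)$-slopes by this one-parameter family in $J$ over a single verified $\chi$ — the infinitude then comes for free from the theorem you were already using.
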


		\begin{proof}
			Set $\chi = 138$ and $\Lambda = ((16, 1), (22, 22), (28, 7))$, From property \ref{property:symbolic_orbit}
			$\Lambda$-orbits and $\Lambda_{0+}$-orbits do not share the same symbolic word.
			Observe that $\frac{1}{\chi}$ satisfies condition $(\ceps0+)$, and apply theorem \ref{theorem:sufficient_conditions}.
		\end{proof}

\end{document}